\newtheorem{theorem}{Theorem}[section]
\newtheorem{lemma}[theorem]{Lemma}
\newtheorem{proposition}[theorem]{Proposition}
\newtheorem{observation}[theorem]{Observation}
\newtheorem{claim}{Claim}
\newcommand{\claimproofend}{\hspace*{.1mm}\hspace{\fill}}
\newenvironment{claimproof}{}{\claimproofend\par\vspace{2mm}}
\newcommand{\RP}{\mathbb R\mathrm P}
\newcommand{\fig}[1]{\includegraphics[page=#1]{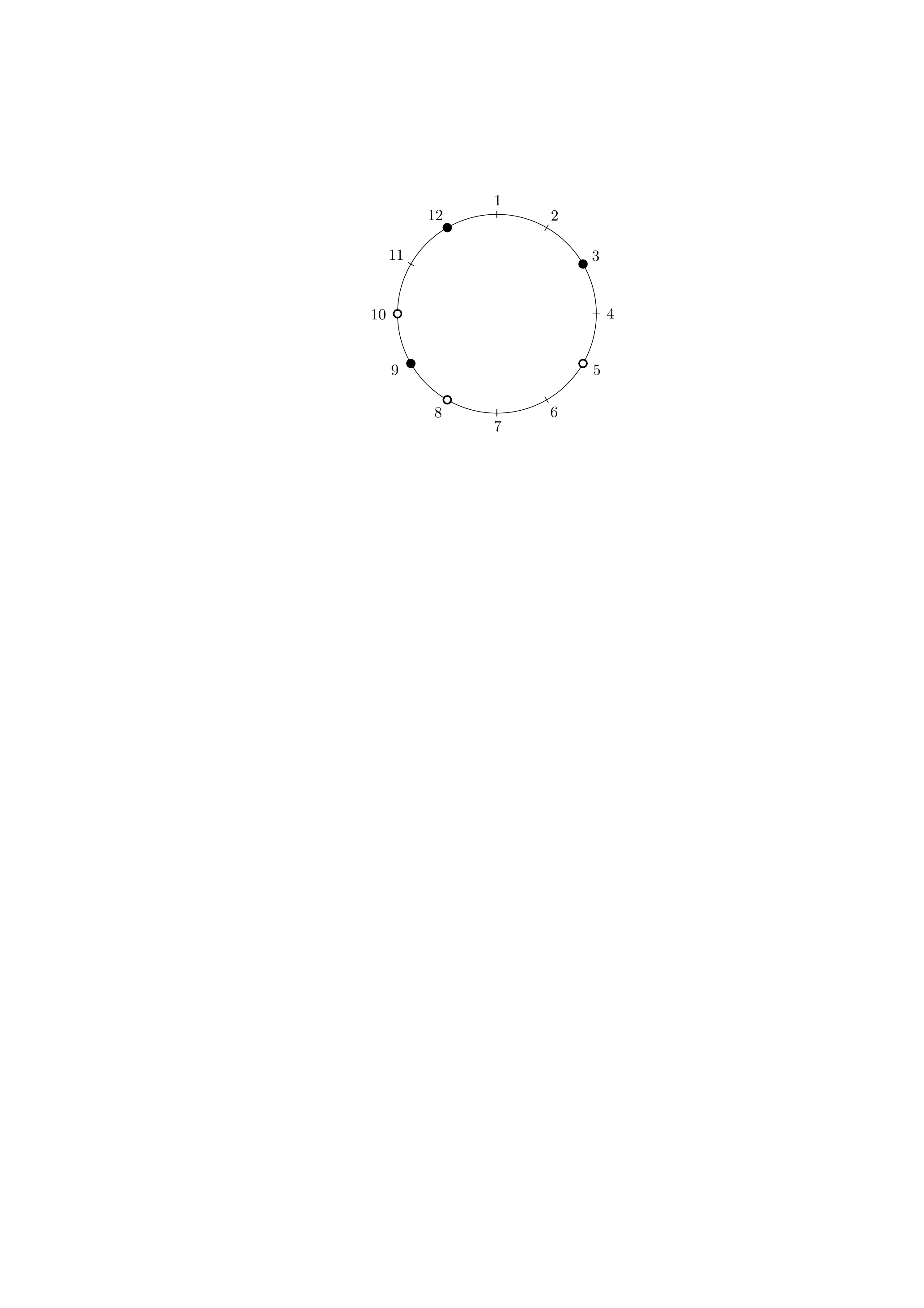}}
\newcommand{\hf}{\hspace*{0mm}\hfill\hspace*{0mm}}
\newcommand{\size}[1]{\left|#1\right|}
\newcommand\Set[2] {\left\{{#1}:\,{#2}\right\}}
\newcommand\Setx[1] {\left\{{#1}\right\}}
\newcommand{\opair}[2]{\left\langle #1,#2 \right\rangle}
\newcommand{\col}[1]{\framebox{\footnotesize $#1$}}
\DeclareMathOperator{\KG}{KG}
\DeclareMathOperator{\SG}{SG}
\DeclareMathOperator{\XG}{XG}
\begin{document}
\title{\textbf{Edge-critical subgraphs of Schrijver graphs II: The
    general case}}
\author{Tom\'a\v s Kaiser$^{\:1}$\and Mat\v{e}j Stehl\'{\i}k$^{\:2}$}
\date{\today}

\maketitle
\begin{abstract}
  We give a simple combinatorial description of an $(n-2k+2)$-chromatic
  edge-critical subgraph of the Schrijver graph $\SG(n,k)$, itself
  an induced vertex-critical subgraph of the Kneser graph $\KG(n,k)$.
  This extends the main result of [J.\ Combin.\ Theory Ser.\
  B 144 (2020) 191--196] to all values of $k$, and sharpens the
  classical results of Lov\'asz and Schrijver from the 1970s.
\end{abstract}
\footnotetext[1]{Department of Mathematics and European Centre of
  Excellence NTIS (New Technologies for the Information Society),
  University of West Bohemia, Pilsen, Czech Republic. E-mail:
  \texttt{kaisert@kma.zcu.cz}. Supported by project GA20-09525S of
  the Czech Science Foundation.}%
\footnotetext[2]{Laboratoire G-SCOP, Univ.\ Grenoble Alpes,
  France. E-mail: \texttt{matej.stehlik@grenoble-inp.fr}.  Partially
  supported by ANR project GATO (ANR-16-CE40-0009-01).}%

\section{Introduction}
\label{sec:introduction}

Given integers $k \geq 1$ and $n \geq 2k$,
the \emph{Kneser graph} $\KG(n,k)$ is defined as follows:
the vertices are all the $k$-element subsets of $[n]=\Setx{1, \ldots, n}$,
and the edges are the pairs of disjoint subsets.
A famous conjecture of Kneser~\cite{Kne55}, proved by Lov\'asz~\cite{Lov78}, states that
$\KG(n,k)$ is $(n-2k+2)$-chromatic. Schrijver~\cite{Sch78} sharpened
the result by identifying the elements of $[n]$ with the vertices
of the $n$-cycle $C_n$, and showing that the \emph{Schrijver graph}
$\SG(n,k)$ --- the subgraph of $\KG(n,k)$ induced by the vertices
containing no pair of adjacent elements of $C_n$ ---
is also $(n-2k+2)$-chromatic.
Moreover, Schrijver proved that $\SG(n,k)$ is \emph{vertex-critical}, i.e., the
removal of any vertex decreases the chromatic number.

There is a stronger (and arguably, more natural) notion of
criticality: a graph is said to be \emph{edge-critical}, or simply
\emph{critical}, if the removal of any edge decreases the chromatic
number --- in other words, if any proper subgraph (not necessarily
induced) has a smaller chromatic number than the graph itself.

The Schrijver graph $\SG(n,k)$ is not edge-critical, unless $k=1$ or $n=2k+1$.
This prompts the following natural question: can we give a simple combinatorial
description of an $(n-2k+2)$-chromatic edge-critical subgraph of
$\SG(n,k)$?

In a recent paper~\cite{KS20}, such a construction was given for the
case $k=2$. Here we extend the construction to all values of $k$,
thereby sharpening Schrijver's theorem.

An edge $AB$ of $\SG(n,k)$ is said to be \emph{interlacing} if the
elements of $A$ and $B$ alternate as we go round $C_n$. Simonyi and
Tardos~\cite{ST19+} recently proved that any edge of $\SG(n,k)$ whose
removal decreases the chromatic number is interlacing. Thus,
a tempting candidate for an $(n-2k+2)$-chromatic edge-critical
subgraph of $\SG(n,k)$ might be the spanning subgraph formed by the
interlacing edges. However, Litjens et al.~\cite{LPSV18+} have shown
that this graph has chromatic number $\lceil n/k \rceil$, so
interlacing edges are much too restrictive.

We introduce instead the notion of \emph{almost-interlacing} edges (we
postpone the definition to Section~\ref{sec:definition}), and
define $\XG(n,k)$ to be the spanning subgraph of $\SG(n,k)$ formed by
the almost-interlacing edges. The main result of
this paper is the following theorem:
\begin{theorem}\label{thm:main}
  For every $k \geq 1$ and every $n \geq 2k$, $\chi(\XG(n,k))=n-2k+2$.
  Moreover, $\XG(n,k)$ is edge-critical.
\end{theorem}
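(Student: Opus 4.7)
The plan is to split Theorem~\ref{thm:main} into two parts: computing $\chi(\XG(n,k))$ and verifying edge-criticality. The upper bound $\chi(\XG(n,k)) \le n-2k+2$ comes for free, since $\XG(n,k)$ is a spanning subgraph of $\SG(n,k)$ and Schrijver's theorem~\cite{Sch78} gives $\chi(\SG(n,k)) = n-2k+2$. Everything else requires genuine work.

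For the matching lower bound $\chi(\XG(n,k)) \ge n-2k+2$, I would attempt a topological argument in the spirit of Lov\'asz~\cite{Lov78} and \cite{KS20}. Concretely, I would associate to $\XG(n,k)$ a $\mathbb{Z}_2$-equivariant simplicial complex (say, the box complex or a variant tailored to almost-interlacing pairs) and show that its $\mathbb{Z}_2$-index is at least $n-2k$, which by the standard topological lower bound yields $\chi(\XG(n,k)) \ge n-2k+2$. The most natural way to produce such a bound is to exhibit an explicit $\mathbb{Z}_2$-map $S^{n-2k}\to B(\XG(n,k))$ via a moment-curve / Gale transform construction: points on the sphere would parametrize ``signed arc configurations'' on $C_n$ whose positive and negative halves furnish a pair of vertices of $\XG(n,k)$ forming an almost-interlacing edge. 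The delicate point is that the moment curve naturally gives interlacing pairs; one must argue that weakening to almost-interlacing is still consistent with the equivariance and that no sphere point is mapped outside $B(\XG(n,k))$.

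For edge-criticality, I would show that for every almost-interlacing edge $e=AB$ of $\XG(n,k)$ there is a proper $(n-2k+1)$-coloring of $\XG(n,k)-e$. The approach is to start from the canonical ``greedy'' $(n-2k+2)$-coloring of $\SG(n,k)$ implicit in Schrijver's proof (colors determined by the smallest cyclic index appearing in certain special positions) and then perform a combinatorial surgery, driven by the pair $A,B$, that merges two color classes. The definition of almost-interlacing should be engineered precisely so that after the merge the only potential monochromatic pair in $\SG(n,k)$ that might remain is $AB$ itself; and even that pair contributes no monochromatic edge of $\XG(n,k)-e$, because $e$ has been removed. One would verify that any other pair made monochromatic by the merge fails the almost-interlacing condition, hence is not an edge of $\XG(n,k)$.

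The main obstacle will be the topological lower bound. Almost-interlacing is a strict relaxation of interlacing, but it is still far more restrictive than general Kneser/Schrijver adjacency, so one loses almost all of the combinatorial richness that drives the classical proofs. Finding the right $\mathbb{Z}_2$-complex and the right equivariant test map, so that the topological index matches $n-2k$ on the nose, is where I expect the work to concentrate. Once that is in place, the edge-criticality step should be an explicit and essentially local recoloring argument, generalizing the case $k=2$ treated in~\cite{KS20}, with the Simonyi--Tardos theorem~\cite{ST19+} serving as a useful sanity check on which edges can possibly be critical.
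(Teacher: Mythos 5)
Your decomposition of the problem is right (upper bound for free from Schrijver, lower bound and edge-criticality requiring work), but both of your proposed methods diverge sharply from what the paper actually does, and both are left as sketches with unresolved gaps at precisely the steps you flag as delicate.

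For the lower bound, the paper does not use a box complex or a moment-curve test map. Instead it proves a purely combinatorial homomorphism lemma: $M_k(\XG(n-1,k))$ admits a graph homomorphism into $\XG(n,k)$, where $M_k$ is the generalised Mycielski operation. Iterating this from the base case $\XG(2k,k)\cong K_2$ produces a graph in the Stiebitz class $\mathcal M_{n-2k}$ that maps homomorphically into $\XG(n,k)$, and Stiebitz's theorem ($\chi = t$ for any member of $\mathcal M_t$) then yields the bound. The topological input is thus encapsulated once and for all in Stiebitz's theorem; the new work is the explicit homomorphism $f$, which the paper builds by truncating the $j$ smallest elements of a vertex and replacing them with a fixed ``comb'' $\Lambda_{n,j}$, followed by a careful case analysis showing that admissible intervals and alternators are preserved. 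Your proposed equivariant map $S^{n-2k}\to B(\XG(n,k))$ is plausible in spirit --- the authors themselves remark that $\XG(n,k)$ quadrangulates $\RP^{n-2k}$ and that this gives an alternative topological proof in a forthcoming paper --- but you have not constructed it, and as you yourself note, the weakening from interlacing to almost-interlacing is exactly where such a map is likely to break. As it stands this part is a research programme, not a proof.

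For edge-criticality, the gap is more serious, because the mechanism you propose does not match how the coloring actually has to be built. Merging two classes of a Schrijver-style greedy coloring of $\SG(n,k)$ typically creates a large number of new monochromatic edges, not just $AB$; there is no reason to expect the almost-interlacing relation to kill all of them, and the paper makes no attempt to proceed this way. What the paper does instead is fix the standard $AB$-alternator $C\cup D$, set $W = A\cup B\cup C\cup D$, and construct an $(n-2k+1)$-coloring of $\XG(n,k)-AB$ from scratch using the color set $\{\col i : i\in[n]\setminus(A\cup B)\}\cup\{\col 0\}$. The assignment rules (R1)--(R7) depend on a whole apparatus tailored to the alternator: inessential vertices get the color of an element outside $W$; the remaining vertices are classified via balanced pairs, min-heavy/max-light intervals, a ``depth'' function $\delta(X)$, and a ``skew'' condition at the $d_i$'s. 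Verifying that each color class is independent then uses Lemmas~\ref{l:disbalance}, \ref{l:depth-control}, \ref{l:depth-indep} and Propositions~\ref{p:irregular}, \ref{p:half}, none of which has an analogue in a merge argument. So this part of your proposal is not merely a different route with gaps to fill; the intended step (engineer a merge so that $AB$ is the only new monochromatic pair) is the wrong shape of argument, and you would need to abandon it and build a bespoke coloring as the paper does.
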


We remark that the definition of almost-interlacing edges is particularly
simple for the case $k=2$. Indeed,
almost-interlacing edges of $\SG(n,2)$ correspond to crossing and
transverse edges defined in~\cite{KS20}, so the graph $\XG(n,2)$ is precisely
the graph $G_n$ studied in~\cite{KS20}.

In a forthcoming paper, we will relate the graph $\XG(n,k)$ to the
graphs studied in~\cite{KS17}, and show that $\XG(n,k)$ is a
quadrangulation of $\RP^{n-2k}$ (see~\cite{KS15} for a
definition). In conjunction with the results from~\cite{KS15}, this
gives a new proof of the first part of Theorem~\ref{thm:main}.

For terminology not defined here, we refer the reader to Bondy and Murty~\cite{BM08}.

The paper is structured as follows. Preliminary definitions and
observations are collected in
Section~\ref{sec:preliminaries}. Section~\ref{sec:definition} gives
the definition of the graph $XG(n,k)$. The chromatic number of this
graph is determined in Section~\ref{sec:chi}, and the graph is shown
to be edge-critical in Section~\ref{sec:critical}.

\section{Preliminaries}
\label{sec:preliminaries}

Let $C_n$ be the $n$-cycle with vertex set $[n] = \Setx{1,\dots,n}$
and edges between consecutive integers as well as between $1$ and
$n$. The vertices of the Schrijver graph $\SG(n,k)$ mentioned in
Section~\ref{sec:introduction} are independent sets in $C_n$ of size
$k$; two such sets are adjacent in $\SG(n,k)$ if they are disjoint.

We usually visualise $C_n$ in such a way that the vertices $1,\dots,n$
appear clockwise in the given order. The vertices of $C_n$ will be
referred to as \emph{elements} to distinguish them from the vertices
of $\SG(n,k)$ or of the graph $\XG(n,k)$ we will shortly define. Any
arithmetic operations with the elements are performed modulo the
equality $n + 1 = 1$.

Our arguments frequently use intervals in $C_n$. For $a,b\in [n]$, the
interval $[a,b]$ is the set $\Setx{a,a+1,\dots,b}$. Thus, $[a,b]$
consists of $a$ and the elements following $a$ clockwise up to $b$. In
case $b=a-1$, the interval $[a,b]$ contains all elements of
$[n]$. By a slight abuse of this notation, we will also write $[0,n]$
for the set $\Setx{0,\dots,n}$.

Open or half-open versions of intervals, namely $(a,b)$, $[a,b)$ or
$(a,b]$, are defined as expected: for instance, $[a,b) = [a,b-1]$. All
of the following definitions are modified for these other versions of
intervals in a straightforward way.

If $X\subseteq [n]$, it will be convenient to let
$[a,b]_X = [a,b]\cap X$. The set carries a natural ordering given by
the interval; thus, for instance, the \emph{first} element of
$[a,b]_X$ is the element of this set encountered first when moving
clockwise from $a$ to $b$.

To distinguish ordered pairs from open intervals, we use the notation
$\opair ab$ for an ordered pair consisting of elements $a$ and
$b$. For a set $X\subseteq [n]$, we say that the pair $\opair ab$ is
\emph{$X$-consecutive} if $a,b\in X$ are distinct and $(a,b)_X = \emptyset$.

If $I$ is an interval in $[n]$, we say that disjoint subsets $A,B$ of
$[n]$ \emph{alternate on} $I$ if the elements of $A$ alternate with
those of $B$ as we follow $C_n$ from the start to the end of $I$. Sets
$A,B$ which alternate on $[n]$ are said to form an \emph{interlacing
  pair}.

A crucial notion for our construction is that of an admissible
interval. For disjoint subsets $A,B$ of $[n]$, an interval $[d,c]$ is
\emph{weakly $AB$-admissible} if
\begin{equation*}
  \size{[d,c]_A} = \size{[d,c]_B} = c.
\end{equation*}
Furthermore, a weakly $AB$-admissible interval $[d,c]$ is
\emph{$AB$-admissible} if
\begin{equation*}
  c,d\notin A\cup B.
\end{equation*}
We extend these notions to open or half-open intervals such as $(d,c)$
or $(d,c]$ in precisely the same way, just replacing $[d,c]$ with the
interval in question. 

Let us examine some basic properties of weakly $AB$-admissible
intervals, where $A,B$ are disjoint subsets of $[n]$, each of size
$k$. It is not yet required at this point that $A$ and $B$ be
independent in $C_n$, so we may view $AB$ as an edge of the Kneser
graph $\KG(n,k)$.

\begin{observation}\label{obs:sep}
  If $AB$ is an edge of $\KG(n,k)$ and $[d,c]$ is a weakly
  $AB$-admissible interval, then $c\leq k < d$.
\end{observation}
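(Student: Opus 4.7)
The bound $c \le k$ is essentially a one-liner: weak admissibility gives $c = \size{[d,c]_A} \le \size{A} = k$.

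For the lower bound $d > k$, the natural strategy is to examine the complementary interval $[c+1,d-1]$, which together with $[d,c]$ partitions $[n]$. Since $A$ has $k$ elements of which exactly $c$ lie in $[d,c]$, it must contribute the remaining $k-c$ elements to $[c+1,d-1]$; the same holds for $B$. Disjointness of $A$ and $B$ then forces
\[
  \size{[c+1,d-1]} \ge 2(k-c).
\]

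To convert this cardinality bound into the numerical bound $d \ge k+1$, I would first verify that $[d,c]$ wraps around the cycle, so that $[c+1,d-1]$ is itself a non-wrapping interval whose size equals exactly $d-c-1$. The non-wrapping case $d \le c$ can be ruled out by an analogous count on $[d,c]$: its size $c-d+1$ would have to accommodate $2c$ disjoint elements from $A\cup B$, giving $d \le 1-c$, which is incompatible with $c,d \in [n]$ and $c \ge 1$. Granted $d > c$, the displayed inequality becomes $d-c-1 \ge 2(k-c)$, i.e.\ $d \ge 2k-c+1$, and combining with the already-established $c \le k$ yields $d \ge k+1$. The extremal subcase $d = c+1$, where $[d,c]$ coincides with all of $C_n$, collapses to $c = k$ and $d = k+1$, which is again consistent.

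The only real obstacle is the bookkeeping for wrap-around intervals and the small corner cases; no deeper ingredient appears to be needed.
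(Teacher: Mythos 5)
Your proposal is correct and follows essentially the same route as the paper: $c\leq k$ comes straight from $\size{[d,c]_A}=c\leq\size{A}=k$; then one rules out $d\leq c$ by the counting argument $2c\leq\size{[d,c]}\leq c$; and finally the disjointness count on the complementary interval $(c,d)$, namely $d-c-1\geq\size{(c,d)_{A\cup B}}=2(k-c)$, combined with $c\leq k$, gives $d\geq k+1$. The paper packages the last step as $2k=\size{A\cup B}\leq 2c+(d-c-1)$, which is algebraically identical to your $d\geq 2k-c+1$, and the "extremal subcase $d=c+1$" you mention is already absorbed by the inequality and needs no separate treatment.
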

\begin{proof}
  Note first that $c\leq k$ follows directly from the definition of
  weakly $AB$-admissible interval. Since $A$ and $B$ are disjoint, we
  have $\size{[d,c]_{A\cup B}} = 2c$. It follows that $d > c$, for
  otherwise $2c\leq \size{[d,c]} \leq c$, leading to a contradiction as
  $c \geq 1$. Now
  \begin{equation*}
    2k = \size{A\cup B} = \size{[d,c]_{A\cup B}} + \size{(c,d)_{A\cup B}} \leq 2c + (d-c-1) = c+d-1,
  \end{equation*}
  and since $c \leq k$, we must have $d > k$.
\end{proof}

Another basic property of weakly $AB$-admissible intervals is that
they are nested, as shown by the first part of the
following lemma:
\begin{lemma}
  \label{l:intervals}
  Let $AB$ be an edge of $\KG(n,k)$ and let $[d,c]$, $[d',c']$ be
  weakly $AB$-admissible intervals. Then the following hold:
  \begin{enumerate}[label=(\roman*)]
  \item $[d',c']\subseteq [d,c]$ or vice versa,
  \item if $[d',c']\subseteq [d,c]$ and $c' < c$, then the set
    $[d,d')_{A\cup B}$ is nonempty; if, moreover, $[d,c]$ is
    $AB$-admissible, then $\size{(d,d')_{A\cup B}} \geq 2$.
  \end{enumerate}
\end{lemma}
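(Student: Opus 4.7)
My plan is to exploit the weak admissibility conditions together with the disjointness of $A$ and $B$ through a double-counting argument on a suitable partition of $[n]$ into arcs. Observation~\ref{obs:sep} places all four endpoints so that $c,c'\leq k<d,d'$; in particular, both intervals $[d,c]$ and $[d',c']$ wrap around the equality $n+1=1$, and set-theoretic inclusion $[d',c']\subseteq[d,c]$ becomes the elementary condition $c'\leq c$ and $d'\geq d$. The only non-nested configuration is then $c<c'$ together with $d<d'$, up to swapping the two intervals, and this is what I aim to rule out in part~(i).

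For~(i), I will assume towards a contradiction that $c<c'$ and $d<d'$, and partition $[n]$ clockwise into the four arcs $\alpha=(c,c']$, $\beta=(c',d)$, $\gamma=[d,d')$, and $\delta=[d',c]$. Weak admissibility rewrites as $\size{\gamma_A}+\size{\delta_A}=c$ and $\size{\delta_A}+\size{\alpha_A}=c'$, with identical equations for $B$. Subtracting gives $\size{\alpha_A}-\size{\gamma_A}=c'-c$, and summing the two versions (using $A\cap B=\emptyset$) yields
\[
  \size{\alpha_{A\cup B}}-\size{\gamma_{A\cup B}}=2(c'-c).
\]
Since $\size{\alpha_{A\cup B}}\leq\size{\alpha}=c'-c$ and $\size{\gamma_{A\cup B}}\geq 0$, this contradicts $c<c'$.

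For~(ii), suppose $[d',c']\subseteq[d,c]$ and $c'<c$, so $d\leq d'$. I first rule out $d=d'$: otherwise weak admissibility would force both $(c',c]\subseteq A$ and $(c',c]\subseteq B$, contradicting $A\cap B=\emptyset$. With $d<d'$ in hand, I decompose $(c',d')=(c',c]\cup(c,d)\cup[d,d')$ and use $\size{(c,d)_A}=k-c$ and $\size{(c',d')_A}=k-c'$, together with their $B$-analogues, to obtain
\[
  2(c-c')=\size{(c',c]_{A\cup B}}+\size{[d,d')_{A\cup B}}.
\]
Bounding $\size{(c',c]_{A\cup B}}\leq\size{(c',c]}=c-c'$ yields $\size{[d,d')_{A\cup B}}\geq c-c'\geq 1$, as claimed.

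The strengthening under $AB$-admissibility of $[d,c]$ is now automatic: the conditions $c,d\notin A\cup B$ improve the bound to $\size{(c',c]_{A\cup B}}\leq c-c'-1$ and allow me to discard the boundary point $d$, giving $\size{(d,d')_{A\cup B}}=\size{[d,d')_{A\cup B}}\geq c-c'+1\geq 2$. The only point that requires a bit of care is spotting the right four-arc decomposition; once this is fixed, everything reduces to elementary inclusion--exclusion combined with $A\cap B=\emptyset$.
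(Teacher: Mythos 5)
Your proof is correct. The underlying idea is the same as the paper's --- elementary counting of $A\cup B$ over arcs of $C_n$, using that $\size{A\cup B}=2k$ and $A\cap B=\emptyset$ --- but the decompositions differ. For (i), the paper simply expands $2c'=\size{[d',c']_{A\cup B}}$ as $\size{[d',c]_{A\cup B}}+\size{(c,c']_{A\cup B}}\leq 2c+(c'-c)$, whereas you partition all of $[n]$ into four arcs and subtract the admissibility equations; your version is more symmetric but slightly longer. For (ii), the paper expands $2c=\size{[d,c]_{A\cup B}}$ directly over $[d,d')\cup[d',c']\cup(c',c]$, which handles $d\leq d'$ uniformly (it shows $\size{[d,d')_{A\cup B}}\geq c-c'\geq 1$, so $d<d'$ follows automatically); you instead decompose the complement $(c',d')$, which requires knowing $d<d'$ in advance, and so you rule out $d=d'$ by a separate (correct) one-line argument. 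Both routes are equally elementary; the paper's choice of intervals is marginally more economical because it avoids the extra case split.
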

\begin{proof}
  (i) Suppose that the claim does not hold. By
  Observation~\ref{obs:sep} and by symmetry, we may assume that
  $c < c' < d < d'$. Then
  \begin{align*}
    2c' = \size{[d',c']_{A\cup B}} = \size{[d',c]_{A\cup B}} +
    \size{(c,c']_{A\cup B}} \leq 
    2c + (c'-c)
  \end{align*}
  implying $c' \leq c$, a contradiction.

  (ii) Our assumptions imply $c' < c < d \leq d'$. We have
  \begin{align}
    2c = \size{[d,c]_{A\cup B}}
    &= \size{[d,d')_{A\cup B}} +
      \size{[d',c']_{A\cup B}} + \size{(c',c]_{A\cup B}} \nonumber\\
    &\leq \size{[d,d')_{A\cup B}} + 2c' + (c-c'), \label{eq:cross}
  \end{align}
  so $\size{[d,d')_{A\cup B}} \geq c-c' \geq 1$.

  If $c,d\notin A\cup B$, then the $(c-c')$ term in~\eqref{eq:cross}
  improves to $(c-c'-1)$ and furthermore, we can write
  $(d,d')_{A\cup B}$ in place of $[d,d')_{A\cup B}$. The second
  assertion follows.
\end{proof}

We conclude this section by the definition of switching, used in
Section~\ref{sec:definition} to introduce the graph
$\XG(n,k)$. Suppose that $c,d\in [n]$. \emph{Switching} at $[d,c]$ is
the operation transforming any pair $AB$ of subsets of $[n]$ to
another such pair $A'B'$ defined as follows:
\begin{align*}
  A' &= A \mathbin\triangle [d,c]_{A\cup B},\\
  B' &= B \mathbin\triangle [d,c]_{A\cup B},
\end{align*}
where $\mathbin\triangle$ denotes symmetric difference. The pair $A'B'$ is the
\emph{result} of the switching.

It is easy to see that if $AB$ is an edge of $\KG(n,k)$, then
the result of switching $AB$ at a weakly $AB$-admissible interval is
again an edge of $\KG(n,k)$. A similar statement holds for $\SG(n,k)$
and switching at an $AB$-admissible interval.

\emph{Switching along a sequence} $([d_i,c_i])_{i\in [m]}$ of
intervals means switching at $[d_1,c_1],\dots,[d_m,c_m]$ in this
order. (Switching along an empty sequence is the identity operation on
pairs.)

Under an admissibility assumption, switching along a sequence of
intervals maps any edge of the
Schrijver graph $\SG(n,k)$ to an edge:
\begin{observation}\label{obs:admissible}
  Let $AB$ be an edge of $\SG(n,k)$ and let $A'B'$ be the pair obtained
  by switching $AB$ along a sequence $S$ of $AB$-admissible
  intervals. The following holds:
  \begin{enumerate}[label=(\roman*)]
  \item $A'B'$ is again an edge of $\SG(n,k)$,
  \item any weakly $AB$-admissible interval is weakly
    $A'B'$-admissible and vice versa.
  \end{enumerate}
\end{observation}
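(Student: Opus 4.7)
The plan is to prove both parts by induction on the length $m$ of the sequence $S=([d_i,c_i])_{i\in[m]}$, with the base case $m=0$ trivial. The inductive step reduces to establishing single-switch versions of (i) and (ii). The key algebraic observation is that switching preserves the union $A\cup B$ set-theoretically: since each switch is a symmetric difference with a subset of $A\cup B$, we have $A_1\cup B_1 = A\cup B$. Thus the hypothesis $c_i,d_i\notin A\cup B$ for $i\geq 2$ carries over to $A_1B_1$, and combined with the single-switch version of (ii), the remaining sequence consists of $A_1B_1$-admissible intervals, so induction applies.

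For the single-switch version of (i), I would observe that after switching at $[d_1,c_1]$ we have $A_1\cap[d_1,c_1] = [d_1,c_1]_B$ and $B_1\cap[d_1,c_1] = [d_1,c_1]_A$, while $A_1,B_1$ agree with $A,B$ outside this interval. Independence of $A_1$ inside $[d_1,c_1]$ then follows from that of $B$, and symmetrically for $B_1$; independence outside is unchanged. The only potential violation is an adjacent pair of elements of $A_1$ (or $B_1$) straddling one of the endpoints $d_1$ or $c_1$. But $AB$-admissibility forces $c_1,d_1\notin A\cup B$, so these positions are empty in $A_1\cup B_1$ too; this empty site separates the inside from the outside and rules out any such adjacency. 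Hence $A_1B_1\in E(\SG(n,k))$.

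For the single-switch version of (ii), let $[d,c]$ be weakly $AB$-admissible. By Lemma~\ref{l:intervals}(i), $[d,c]$ and $[d_1,c_1]$ are nested, and in either nesting case switching at $[d_1,c_1]$ simply exchanges the $A$- and $B$-labels on a sub-interval of $[d,c]$ (or all of it). A direct count then yields $\size{[d,c]_{A_1}} = \size{[d,c]_{B_1}} = c$. For the converse, note that $c_1,d_1\notin A_1\cup B_1$ and $\size{[d_1,c_1]_{A_1}} = \size{[d_1,c_1]_B} = c_1$, so $[d_1,c_1]$ is also $A_1B_1$-admissible; since switching is an involution, switching $A_1B_1$ at $[d_1,c_1]$ recovers $AB$, and the forward direction applied to the pair $A_1B_1$ yields the converse.

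The main obstacle, though not deep, is the boundary analysis in part~(i): it is genuinely crucial that $c_1,d_1\notin A\cup B$ and not merely that $[d_1,c_1]$ is weakly $AB$-admissible, since otherwise an element at position $d_1$ that is in $A$ would be moved to $B_1$ and could become adjacent to an element of $B$ at position $d_1-1$, violating independence.
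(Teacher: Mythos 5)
The paper states Observation~\ref{obs:admissible} without proof, treating it as immediate, so there is no proof to compare against; you have supplied the missing argument, and it is correct. Your reduction to the single-switch case via induction (using the fact that switching preserves $A\cup B$, so the remaining intervals stay admissible for the new pair) is sound, the boundary analysis in~(i) correctly isolates the role of the strict condition $c_1,d_1\notin A\cup B$ in preventing a new adjacency across an endpoint, and the two nesting cases from Lemma~\ref{l:intervals}(i) handle~(ii), with the converse following cleanly from the involutive nature of switching. This is essentially the only natural route, and it is the one the authors evidently had in mind.
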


\section{Definition of $\XG(n,k)$}
\label{sec:definition}

In this section, we define the graph $\XG(n,k)$. Let $k\geq 1$ and $n\geq
2k$. The vertex set of $\XG(n,k)$ coincides with that of $\SG(n,k)$, so
the vertices of $\XG(n,k)$ are all $k$-element independent sets of
$C_n$. The edges of $\XG(n,k)$ are all the almost-interlacing pairs,
defined as follows.

A pair $AB$ of vertices, where $A\cap B = \emptyset$, is
\emph{almost-interlacing} if there exists a set
$X = C\cup D \subseteq [n]$ such that $C = \Setx{c_1,\dots,c_m}$ and
$D = \Setx{d_1,\dots,d_m}$, with the following properties:
\begin{enumerate}[label=(\arabic*)]
\item $1\leq c_1 < c_2 < \dots < c_m \leq k-1$,
\item $k+1 \leq d_m < d_{m-1} < \dots < d_1 \leq n$,
\item each interval $[d_i,c_i]$ is $AB$-admissible,
\item switching along the sequence $([d_i,c_i])_{i\in [m]}$ changes
  $AB$ to an interlacing pair.
\end{enumerate}
Any set $X$ satisfying this definition is called an
\emph{$AB$-alternator}. We often write it as $C\cup D$, with $C$ and
$D$ as in the definition. The elements in $C$ are the \emph{control
  elements} of the $AB$-alternator, the elements $c_i$ and $d_i$
($i\in [m]$) \emph{correspond} to each other, and pairs
$\opair {c_i} {d_i}$ ($i\in [m]$) are the \emph{control pairs} of the
$AB$-alternator.

Observe that $\XG(n,k)$ is a spanning subgraph of $\SG(n,k)$. Any pair
of vertices $AB$ that is an interlacing pair is an edge of $\XG(n,k)$,
since in this case the empty set is trivially an $AB$-alternator.

Another example is shown in Figure~\ref{fig:example1}, depicting an
edge $AB$ of $\SG(16,4)$. The set $\Setx{2,3,7,11}$ is an
$AB$-alternator, so $A$ and $B$ are adjacent in $\XG(16,4)$. There is
only one other $AB$-alternator, namely $\Setx{2,3,7,10}$.

\begin{figure}
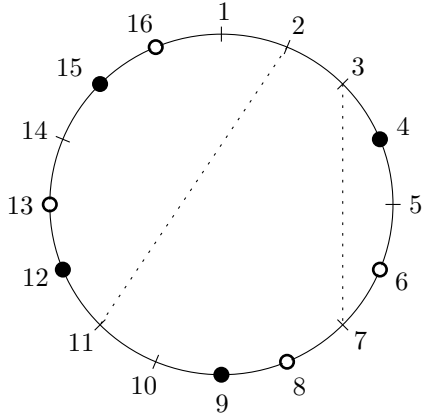

  \centering\fig2
  \caption{Vertices $A = \Setx{4,9,12,15}$ (black dots) and
    $\Setx{6,8,13,16}$ (white dots) of $\XG(16,4)$ forming an
    almost-interlacing pair. The elements not in $A\cup B$ are shown
    as tick marks. Dotted lines mark the control pairs of the
    $AB$-alternator $\Setx{2,3,7,11}$. Similar conventions are used in
    the other figures.}
  \label{fig:example1}
\end{figure}

Let us consider the special case of the definition for $k=2$. (See
Figure~\ref{fig:k2} for an illustration.) Let $AB$ be an edge of
$\SG(n,2)$. We may assume that $A=\Setx{a_1,a_2}$, $B=\Setx{b_1,b_2}$,
where $a_1 < a_2$, $b_1 < b_2$ and $a_1 < b_1$.  Possible
$AB$-alternators are $\emptyset$ (in which case $AB$ is an interlacing
pair), or a set $\Setx{1,d}$, disjoint from $A\cup B$, such that
$[d,1]_{A\cup B} = \Setx{a_2,b_2}$ (which is easily seen to be
equivalent to $1 < a_1 < b_1 < b_2 < a_2$). In the paper~\cite{KS20},
pairs of these two types are referred to as crossing and transverse
pairs, respectively, and they coincide with the edges of the graph
studied in that paper (denoted by $G_n$). Thus, as noted in
Section~\ref{sec:introduction}, the present definition specialises to
the one of~\cite{KS20} for $k=2$.

\begin{figure}
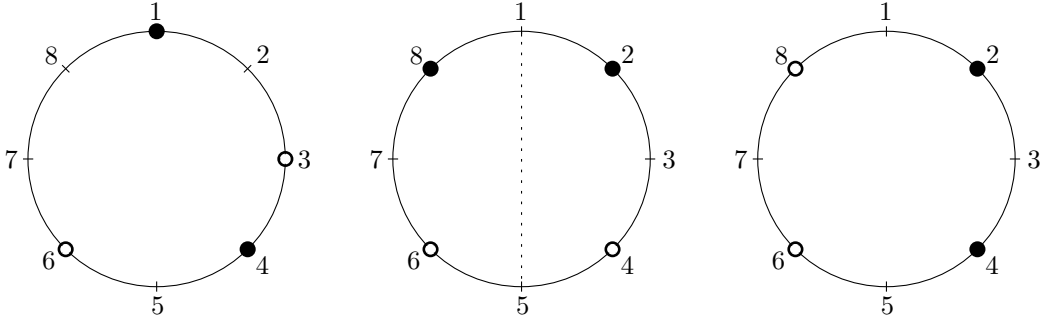

  \fig3\hf\fig4\hf\fig5
  \caption{Examples of edges in $\XG(8,2)$ (left and center) and a
    non-edge in $\XG(8,2)$ (right). The dotted line in the center
    picture shows the only control pair of the $AB$-alternator
    $\Setx{1,5}$.}
  \label{fig:k2}
\end{figure}

Let us add some comments on the definition of edges of $\XG(n,k)$. Note
that in condition (1), the bound $c_m \leq k$ is trivial (and stated
in Observation~\ref{obs:sep}), so (1) just strengthens this bound by
one. Furthermore, the bound $k+1\leq d_m$ in condition (2) is actually
superfluous (though we include it for clarity) as it also follows from
Observation~\ref{obs:sep}. Using Lemma~\ref{l:placement}(i) below, the
bounds in condition (2) can be strengthened to $d_m \geq k+2$ and
$d_1 \leq n-2$.

We will now describe an algorithm that finds an $AB$-alternator
$C\cup D$ if it exists, where $AB$ is an edge of $\SG(n,k)$. It may be
helpful to consult Figure~\ref{fig:example1} for an
illustration. First we need another lemma.

\begin{lemma}\label{l:placement}
  Let $C\cup D$ be an $AB$-alternator for an edge $AB$ of $\SG(n,k)$
  such that $AB$ is not interlacing. The following hold:
  \begin{enumerate}[label=(\roman*)]
  \item if $\opair xy$ is a $(D\cup\Setx{k,n})$-consecutive pair other
    than $\opair nk$, then the size of $[x,y]_{A\cup B}$ is at least
    $2$,
  \item if $\opair xy$ is an $(A\cup B)$-consecutive pair, then
    $\size{(x,y)_{C\cup D}}$ is odd if and only if $x,y\in A$ or
    $x,y\in B$.
  \end{enumerate}
\end{lemma}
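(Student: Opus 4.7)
My plan is to handle (i) by case analysis on the type of consecutive pair, and (ii) by tracking modulo $2$ how many admissible intervals each element lies in.

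For (i), the $(D\cup\Setx{k,n})$-consecutive pairs other than $\opair nk$ come in three families: the internal pairs $\opair{d_{i+1}}{d_i}$ for $1\leq i<m$, the left endpoint pair $\opair{k}{d_m}$, and, when $d_1<n$, the right endpoint pair $\opair{d_1}{n}$. The internal case is immediate from Lemma~\ref{l:intervals}(ii) applied to the nested admissible intervals $[d_i,c_i]\subseteq[d_{i+1},c_{i+1}]$ (nesting from $c_i<c_{i+1}$ and $d_i>d_{i+1}$): it yields $\size{(d_{i+1},d_i)_{A\cup B}}\geq 2$, which equals $\size{[d_{i+1},d_i]_{A\cup B}}$ because the endpoints lie outside $A\cup B$. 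For $\opair{k}{d_m}$, admissibility of $[d_m,c_m]$ gives $\size{[d_m,c_m]_{A\cup B}}=2c_m$, hence $\size{(c_m,d_m)_{A\cup B}}=2(k-c_m)$; at most $k-1-c_m$ of these elements can fit into $[c_m+1,k-1]$, so at least $k-c_m+1\geq 2$ lie in $[k,d_m]$. For $\opair{d_1}{n}$, admissibility of $[d_1,c_1]$ gives $c_1\notin A\cup B$, so $\size{[1,c_1]_{A\cup B}}\leq c_1-1$; combining this with $\size{[d_1,c_1]_{A\cup B}}=2c_1$ and the splitting $[d_1,c_1]=[d_1,n]\cup[1,c_1]$ forces $\size{[d_1,n]_{A\cup B}}\geq c_1+1\geq 2$.

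For (ii), the key observation is that switching at $[d_i,c_i]$ swaps the $A/B$ labels of every $A\cup B$-element inside the interval. Let $s(z)$ count the indices $i$ with $z\in[d_i,c_i]$; then an element $z\in A$ ends up in $A'$ iff $s(z)$ is even, and symmetrically for $B$. Since $A'B'$ is interlacing and $\opair xy$ is $(A\cup B)$-consecutive, $x$ and $y$ lie in opposite sets of $\{A',B'\}$, and a routine parity check shows this is equivalent to: $x,y$ belong to the same set of $\{A,B\}$ iff $s(x)+s(y)$ is odd. It remains to rewrite $s(x)+s(y)$ in terms of $\size{(x,y)_{C\cup D}}$: for each $i$, the set $\Setx{c_i,d_i}\cap(x,y)$ has odd size exactly when $x$ and $y$ lie on opposite sides of $[d_i,c_i]$ (a short check on cyclic position), so summing over $i$ yields $\size{(x,y)_{C\cup D}}\equiv s(x)+s(y)\pmod{2}$.

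I expect the main delicacy to be the endpoint subcases in (i), which genuinely invoke the full admissibility condition $c_i,d_i\notin A\cup B$ rather than merely weak admissibility; the rest of the argument, including all of (ii), is essentially bookkeeping once the switchings are reinterpreted as sums of indicator functions modulo $2$.
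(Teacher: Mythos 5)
Your argument is correct and takes essentially the same route as the paper's proof: part~(i) is the same three-case split (internal pairs via Lemma~\ref{l:intervals}(ii), plus the two endpoint computations invoking $c_m \leq k-1$ and $c_1 \notin A\cup B$), and part~(ii) is the same parity argument, merely recast as a direct count $s(z)$ of intervals containing each element instead of the paper's inductive tracking of whether $A_j$ separates $x$ from $y$; the key observation relating $\size{(x,y)_{C\cup D}} \bmod 2$ to the number of intervals separating $x$ and $y$ is common to both.
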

\begin{proof}
  Let $D = \Setx{d_1,\dots,d_m}$ with $d_1 > \dots > d_m$. Since $AB$
  is not interlacing, we have $m\geq 1$. For $i\in [m]$, let $c_i$ be
  the control element corresponding to $d_i$.

  (i) If $x,y\in D$, then the assertion follows from
  Lemma~\ref{l:intervals}(ii) and the fact that each of the intervals
  $[d_i,c_i]$ is $AB$-admissible.

  For the pair $\opair k{d_m}$, we can write
  \begin{align*}
    2k = \size{A\cup B}
    &= \size{[d_m,c_m]_{A\cup B}} +
      \size{(c_m,k)_{A\cup B}} + \size{[k,d_m)_{A\cup B}}\\
    &\leq 2c_m + (k-c_m-1) + \size{[k,d_m]_{A\cup B}},
  \end{align*}
  so $\size{[k,d_m]_{A\cup B}} \geq k-c_m+1 \geq 2$ since $c_m\leq k-1$.

  Similarly, for the pair $\opair{d_1}n$, we have
  \begin{equation*}
    2c_1 = \size{[d_1,n]_{A\cup B}} + \size{[1,c_1]_{A\cup B}} \leq
    \size{[d_1,n]_{A\cup B}} + (c_1 - 1)
  \end{equation*}
  (using the fact that $c_1\notin A\cup B$), and we find that
  $\size{[d_1,n]_{A\cup B}} \geq c_1 + 1 \geq 2$.

  (ii) Let us say that a subset of $[n]$ is \emph{separating} if it
  contains exactly one of $x$ and $y$. Let $s$ be the number of
  intervals $[d_i,c_i]$ ($i\in [m]$) which are separating. Observe
  that $s$ has the same parity as $\size{(x,y)_{C\cup D}}$.

  For $0\leq j\leq m$, let $A_jB_j$ be the pair obtained from $AB$ by
  switching along $([d_i,c_i])_{i\in [j]}$; in particular,
  $A_0B_0 = AB$. For $j > 0$, it is not hard to see that $A_j$ is
  separating if and only if exactly one of $A_{j-1}$ and $[d_j,c_j]$
  is separating. Now since $A_mB_m$ is an interlacing pair, $A_m$ is
  not separating. It follows that either $A$ is separating and $s$ is
  even, or $A$ is not separating and $s$ is odd. Since $A$ is not
  separating if and only if $x,y\in A$ or $x,y\in B$, and by the above
  observation on the parity of $s$, this implies part (ii).
\end{proof}

Let us return to the task of finding an $AB$-alternator for a given
edge $AB$ of $\SG(n,k)$. Consider any $(A\cup B)$-consecutive pair
$\opair xy$ with $x,y\in [k,n]$. If $x,y\in A$ or $x,y \in B$, then by
Lemma~\ref{l:placement}(ii), our set $D$ needs to contain an element
in $(x,y)$. The latter interval is nonempty since each of $A$ and $B$
is independent in $C_n$. Furthermore, by Lemma~\ref{l:placement}(i),
$D$ must contain exactly one element from this interval. The choice of
the element from $(x,y)$ is arbitrary; in fact, we will see that this
is the only choice we have in the process. In the example of
Figure~\ref{fig:example1}, the set $D$ must include the element $7$
and one element from $\Setx{10,11}$.

Similarly to the above, Lemma~\ref{l:placement}(ii) and (i) implies
that if exactly one of $x,y$ is in $A$, then $(x,y)_D$ must be empty,
because its size is even and at most one. Finally, by
Lemma~\ref{l:placement}(i), $D$ contains no element between $k$ and
the first element of $[k,n]_{A\cup B}$, nor between the last element
of the latter set and $n$.

Summing up, $D$ is obtained by choosing exactly one element in each
interval $(x,y)$ with $\opair xy$ an $(A\cup B)$-consecutive pair with
$x,y\in [k,n]$ and either $x,y\in A$ or $x,y\in B$. Let
$D = \Setx{d_1,\dots,d_m}$ for some such choice. (Thus, for the pair
in Figure~\ref{fig:example1}, $D$ equals $\Setx{7,10}$ or
$\Setx{7,11}$.)

We will show that this determines the set $C$ whenever there exists an
$AB$-alternator. The following lemma provides a tool.

\begin{lemma}\label{l:depth}
  Let $d\in [k,n]$ and let $X$ be a vertex of $\SG(n,k)$. There is at
  most one element $c\in [k-1]$ such that $\size{[d,c]_X} = c$ and
  $c\notin X$.
\end{lemma}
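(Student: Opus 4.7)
The plan is to assume for contradiction that two distinct elements $c_1,c_2\in [k-1]$ both satisfy the stated conditions, say $c_1<c_2$, and to derive a contradiction from the independence of $X$ in $C_n$.

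Since $d\in[k,n]$ and $c_1<c_2\leq k-1$, the interval $[d,c_2]$ wraps around through $n$ to $c_2$, and $[d,c_1]$ is a proper subinterval of $[d,c_2]$. Using $\size{[d,c_1]_X}=c_1$ and $\size{[d,c_2]_X}=c_2$, subtraction gives
\begin{equation*}
  \size{(c_1,c_2]_X} = c_2-c_1 = \size{(c_1,c_2]}.
\end{equation*}
Hence every element of $(c_1,c_2]=\Setx{c_1+1,\dots,c_2}$ belongs to $X$.

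I then split into two cases. If $c_2=c_1+1$, then the single element $c_2$ lies in $X$, directly contradicting the hypothesis $c_2\notin X$. Otherwise $c_2\geq c_1+2$, so the consecutive elements $c_1+1$ and $c_1+2$ both lie in $X$; since these are adjacent in $C_n$, this contradicts the fact that $X$ is an independent set in $C_n$ (i.e., a vertex of $\SG(n,k)$).

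There is essentially no obstacle here: the argument is a one-line count followed by a case split on whether the two candidate values of $c$ are adjacent in $C_n$ or not. The only mild point to keep track of is that the interval $(c_1,c_2]$ lies entirely in $[1,k-1]$ and therefore does not wrap around, so its size really is $c_2-c_1$ and its elements really are the integers $c_1+1,\dots,c_2$ on which independence of $X$ in $C_n$ applies.
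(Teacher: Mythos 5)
Your proof is correct and rests on the same two facts the paper uses (independence of $X$ in $C_n$, and the condition $c\notin X$); it is a direct restatement of the paper's argument, replacing the auxiliary non\mbox{-}increasing function $f(x)=\size{[d,x]_X}-x$ with the equivalent direct count $\size{(c_1,c_2]_X}=c_2-c_1$.
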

\begin{proof}
  For $x\in [k-1]$, let
  \begin{equation*}
    f(x) = \size{[d,x]_X}-x.
  \end{equation*}
  The function $f$ is non-increasing. For each $x\in [k-2]$, we have
  \begin{equation*}
    f(x+1) =
    \begin{cases}
      f(x) & \text{if $x+1\in X$,}\\
      f(x)-1 & \text{otherwise.}
    \end{cases}
  \end{equation*}
  Thus, if $f(x) = f(x+1)$ and $x\leq k-3$, then $f(x+1) > f(x+2)$ by
  the independence of $A$. It follows that we have $f(x) = 0$ for at
  most two values of $x$. Supposing (for the sake of a contradiction)
  that the lemma does not hold, there are two such values, say $c$ and
  $c+1$, where $c\in [k-2]$. Since $f(c+1) = f(c)$, we have
  $c+1\in X$, so $c+1$ does not satisfy the conditions, a
  contradiction.
\end{proof}

For each $i\in [m]$, $C$ has to contain an element $c_i$ such that
$[d_i,c_i]$ is $AB$-admissible. Since $c_i$ has to satisfy the
condition of Lemma~\ref{l:depth} with $X=A$, there is at most one such
element. Furthermore, $c_i$ is independent of the choice of $d_i$:
more precisely, if $\opair xy$ is the $(A\cup B)$-consecutive pair
such that $d_i\in (x,y)$, and if $d'_i\in (x,y)$, then
$[d'_i,c]_{A\cup B} = [d_i,c]_{A\cup B}$ for any $c\in [k-1]$. It
follows that if an $AB$-alternator does exist, then each element of
$C$ is uniquely determined by Lemma~\ref{l:depth}. Our algorithm
returns $C\cup D$ when this is the case, and reports that there is no
$AB$-alternator otherwise. (In the example of
Figure~\ref{fig:example1}, we have $c_1 = 2$ and $c_2 = 3$, so one of
the sets $\Setx{2,3,7,10}$ or $\Setx{2,3,7,11}$ is returned.)

To obtain a unique choice for the $AB$-alternator when it exists, we
impose the extra condition that for each $i\in [m]$,
$d_i + 1\in A\cup B$. This amounts to choosing the largest possible
element for each $d_i$. The resulting $AB$-alternator is called
\emph{standard}. Speaking of the control elements or control pairs for
the edge $AB$, we mean the control elements or pairs of the standard
$AB$-alternator.

\section{Chromatic number}
\label{sec:chi}

In this section, we prove the first part of Theorem~\ref{thm:main} ---
namely, that $\chi(\XG(n,k))=n-2k+2$ for every $k \geq 1$ and every $n \geq 2k$.
It is enough to prove the inequality $\chi(\XG(n,k)) \geq n-2k+2$,
the other inequality being a direct consequence of the fact that $\XG(n,k)$
is a subgraph of $\KG(n,k)$.

The case $k=2$ of Theorem~\ref{thm:main} was proved in~\cite{KS20} using the
so-called Mycielski construction. Here we prove the general case using
the same idea, but rely instead on the \emph{generalised} Mycielski
construction, introduced by Stiebitz~\cite{Sti85} (see
also~\cite{GJS04,SS89}).

Given a graph $G=(V,E)$ and an integer $r \geq 1$, the graph $M_r(G)$
has vertex set $(V \times [0,r-1]) \cup \Setx{z}$, and there is an
edge $(u,0)(v,0)$ and $(u,i)(v,i+1)$ (for every $i \in [0,r-2]$)
whenever $uv \in E$, and an edge $(u,r-1)z$ for all $u \in V$.  The
construction is illustrated in Figure~\ref{fig:gen-mycielski}.

\begin{figure}
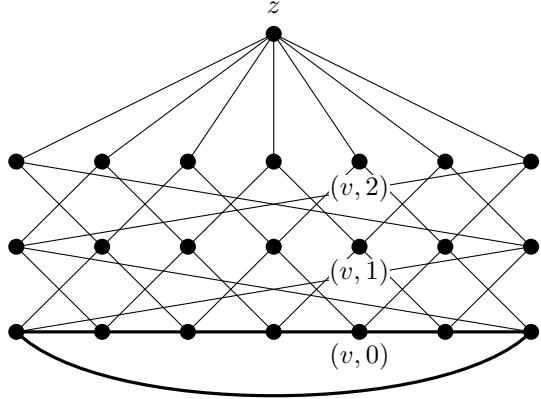

  \centering\fig6
  \caption{The generalised Mycielski construction applied to $C_7$
    (bold) resulting in the graph $M_3(C_7)$.}
  \label{fig:gen-mycielski}
\end{figure}

For every integer $t \geq 2$, we denote by $\mathcal M_t$ the
set of all `generalised Mycielski graphs' obtained from $K_2$ by
$t-2$ iterations of $M_r(\cdot)$, where the value of $r$ can
vary from iteration to iteration. That is, $H \in \mathcal M_t$
if and only if there exist integers $r_1, r_2, \ldots, r_{t-2} \geq 1$
such that
\begin{equation*}
  H \cong M_{r_{t-2}}(M_{r_{t-3}}(\ldots M_{r_2}(M_{r_1}(K_2)) \ldots )).
\end{equation*}

Using topological methods, Stiebitz~\cite{Sti85}
(see also~\cite{GJS04,Mat03}) proved the following result.
A `discrete' proof, based on a combinatorial lemma of Fan, can
be found in~\cite{MS19}.

\begin{theorem}[Stiebitz~\cite{Sti85}]
\label{thm:stiebitz}
  If $G \in \mathcal M_t$, then $\chi(G) = t$.
\end{theorem}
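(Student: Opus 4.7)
The plan is to proceed by induction on $t$, the heart of the argument being that one application of $M_r(\cdot)$ raises the chromatic number by exactly one. The base case $t=2$ is $\chi(K_2)=2$. For the upper bound $\chi(M_r(H))\leq\chi(H)+1$, I would take any proper $\chi(H)$-coloring $c$ of $H$, color every $(v,i)$ by $c(v)$, and assign the apex $z$ a fresh color. Each edge of $M_r(H)$ is then properly colored: edges $(u,0)(v,0)$ and $(u,i)(v,i+1)$ arise from an edge $uv$ of $H$, hence $c(u)\neq c(v)$; and each edge $(u,r-1)z$ is bichromatic by the choice of $z$.

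For the lower bound $\chi(M_r(H))\geq\chi(H)+1$ I would invoke a topological bound. To each graph $G$ attach its box complex $B(G)$, a free $\mathbb Z/2$-simplicial complex satisfying Lov\'asz's inequality $\chi(G)\geq\mathrm{ind}_{\mathbb Z/2}(B(G))+2$. The central claim to establish is
\begin{equation*}
  \mathrm{ind}_{\mathbb Z/2}(B(M_r(G)))\geq\mathrm{ind}_{\mathbb Z/2}(B(G))+1,
\end{equation*}
which I would prove by exhibiting a $\mathbb Z/2$-equivariant map from the suspension $\Sigma B(G)$ into $B(M_r(G))$: one suspension point is sent to the apex $z$, the other to its $\mathbb Z/2$-image obtained from the layer-$0$ copy via the antipodal switch, and the $r$ intermediate layers provide the cylinder part. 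Iterating this inequality from $B(K_2)\simeq S^0$, whose $\mathbb Z/2$-index equals $0$, gives $\mathrm{ind}_{\mathbb Z/2}(B(H))\geq t-2$ for every $H\in\mathcal M_t$, and hence $\chi(H)\geq t$.

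The main obstacle I expect is constructing that $\mathbb Z/2$-equivariant suspension map cleanly. The intermediate layers of $M_r(G)$ do not literally form a product $B(G)\times[0,1]$; one must identify simplices of $B(M_r(G))$ coming from complete bipartite subgraphs that straddle several layers, and verify that the natural collapse onto a suspension is both continuous and equivariant. A combinatorial alternative, in the spirit of the discrete proof mentioned in the excerpt, would be to use Fan's lemma: in any supposed $(t-1)$-coloring of $M_r(H)$ one labels vertices by signed colors and traces an alternating chain through the layers, each iteration of $M_r$ being responsible for one extra sign change, forcing more sign changes than Fan's lemma permits and yielding the desired contradiction without explicit topology.
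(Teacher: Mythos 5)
The paper does not prove this theorem; it cites Stiebitz's original topological argument and mentions a later discrete proof via Fan's lemma. So your sketch stands on its own rather than against a proof in the text. Your upper bound $\chi(M_r(H))\leq\chi(H)+1$ is correct, and the overall scheme---a $\mathbb{Z}/2$-invariant of the box complex that increases by one under $M_r$, combined with the greedy upper bound and induction from $K_2$---is indeed the structure of Stiebitz's proof, so the approach is the right one.

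Two issues remain. First, the suspension-map claim \emph{is} the theorem: you leave it as a plan and yourself flag that the intermediate layers do not form a literal product $B(G)\times[0,1]$ and that identifying the right simplices and checking equivariance is where the work lies. As written the crucial step is not carried out. Second, the invariant you want to track is the $\mathbb{Z}/2$-\emph{coindex}, not the index. A $\mathbb{Z}/2$-map $\Sigma B(G)\to B(M_r(G))$, precomposed with the suspension of a $\mathbb{Z}/2$-map $S^n\to B(G)$, yields a $\mathbb{Z}/2$-map $S^{n+1}\to B(M_r(G))$, hence $\mathrm{coind}(B(M_r(G)))\geq\mathrm{coind}(B(G))+1$. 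It does \emph{not} directly force the index to increase: from a $\mathbb{Z}/2$-map $\Sigma B(G)\to S^m$ one recovers, by restricting to the equator, only a map $B(G)\to S^m$, giving $\mathrm{ind}(B(G))\leq\mathrm{ind}(\Sigma B(G))$ with no strict gap. Fortunately the weaker Borsuk--Ulam-type inequality $\chi(G)\geq\mathrm{coind}(B(G))+2$ is enough: starting from $\mathrm{coind}(B(K_2))=0$ and gaining one with each application of $M_r$ gives $\chi(G)\geq t$ for $G\in\mathcal M_t$, matching the upper bound. Your closing mention of a Fan's-lemma alternative corresponds to the discrete proof the paper also cites; with the coindex fix and the suspension map actually constructed, your proposal would be a faithful rendering of Stiebitz's argument.
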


We now come to the key lemma of this section.

\begin{lemma}
  \label{lem:homomorphism}
  For every $k \geq 1$ and every $n\geq 2k$, $M_k(\XG(n-1,k))$ is homomorphic
  to $\XG(n,k)$.
\end{lemma}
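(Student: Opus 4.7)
The plan is to exhibit an explicit homomorphism $\phi\colon M_k(\XG(n-1,k))\to\XG(n,k)$. Regarding $[n-1]$ as a subset of $[n]$, each vertex $A$ of $\XG(n-1,k)$ becomes a $k$-subset of $[n]$ not containing the new element~$n$. I would take $\phi(A,0)=A$ at level~$0$, set $\phi(A,i)=\phi_i(A)$ at each level $i\in[1,k-1]$ for an explicit operator $\phi_i$ that injects~$n$ into $A$ (for instance, by shifting a single element from the ``high'' interval $[n-k+1,n-1]$ outward into the slot $n$, in a manner depending on~$i$), and define $\phi(z)=v_0$ for a specific canonical vertex $v_0\in V(\XG(n,k))$, a natural candidate being the independent set $\{1,3,\dots,2k-1\}$.

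First I would dispose of well-definedness and the level-$0$ edges. Each $\phi_i(A)$ is independent in $C_n$: this is routine, using that $A$ is independent in $C_{n-1}$ and so $\{1,n-1\}\not\subseteq A$. If $AB$ is an edge of $\XG(n-1,k)$ with $AB$-alternator $X\subseteq[n-1]$, then the same $X$ is an $AB$-alternator in $\XG(n,k)$: since $n\notin A\cup B$, the counts $\size{[d,c]_A},\size{[d,c]_B}$ and the endpoint conditions $c,d\notin A\cup B$ are unaffected when $[d,c]$ is reinterpreted in $C_n$, and switching along $X$ produces an interlacing pair in $C_n$ (the slot $n$ merely acts as an extra gap that does not disturb alternation). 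In particular, the natural inclusion $\XG(n-1,k)\hookrightarrow\XG(n,k)$ is a homomorphism and handles all level-$0$ edges.

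The bulk of the work is then the cross-level edges and the apex edges. For a cross-level edge $(A,i)(B,i+1)$ with $AB\in E(\XG(n-1,k))$, I would exhibit an alternator in $\XG(n,k)$ witnessing that $\phi_i(A)$ and $\phi_{i+1}(B)$ are almost-interlacing, built from the standard $AB$-alternator $X$ by augmenting it with one new control pair $\opair{c_{\mathrm{new}}}{d_{\mathrm{new}}}$ whose $d$-part equals (or lies near)~$n$; this new pair absorbs the single elementary shift introduced when passing from level $i$ to level~$i+1$. For the apex edges, I would construct, for each $A$, an alternator tailored to the pair $\phi_{k-1}(A),v_0$; with the choice $v_0=\{1,3,\dots,2k-1\}$, its control elements can be read off from the positions of $v_0$ interleaved with those of $\phi_{k-1}(A)$.

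The main obstacle is the coordinated design of the $\phi_i$, the choice of $v_0$, and the explicit alternators needed in the cross-level and apex cases: all three have to dovetail so that every cross-level and apex pair is almost-interlacing simultaneously. The structural lemmas of Section~\ref{sec:preliminaries} are the principal tools: Lemma~\ref{l:intervals} controls the nesting of weakly admissible intervals and so governs how new control pairs may be inserted; Lemma~\ref{l:placement} pins down the locations of control elements and pairs; and Lemma~\ref{l:depth} gives the uniqueness of the control element at a prescribed depth, reducing the verification to a local combinatorial check. The algorithm described after Lemma~\ref{l:depth}, which recovers the standard alternator from a given pair $AB$, should guide the construction of the augmented alternators in both the cross-level and the apex step.
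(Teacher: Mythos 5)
Your overall strategy—exhibit an explicit map $M_k(\XG(n-1,k))\to\XG(n,k)$, keep level~$0$ as the identity, and directly produce alternators for cross-level and apex edges—matches the paper's. But the specific design you sketch for the level maps $\phi_i$ and the apex image $v_0$ would not work, and these are exactly the places where the proof needs to be got right.

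The fatal flaw is in the description of $\phi_i$: if every $\phi_i$ ``injects $n$ into $A$'' by shifting one element of $A$ into the slot $n$, then for a cross-level edge $(A,j)(B,j+1)$ both images $\phi_j(A)$ and $\phi_{j+1}(B)$ would contain $n$, so they would not even be disjoint, let alone adjacent in $\XG(n,k)$. The paper avoids this by mapping $(A,j)$ to $(A\setminus A^j)\cup\Lambda_{n,j}$, where $A^j$ consists of the $j$ elements of $A$ nearest $j$ (going counterclockwise) and $\Lambda_{n,j}$ is a fixed ``ladder'' of $j$ elements straddling the $1$/$n$ boundary, with the parity-dependent definition ensuring that $\Lambda_{n,j}$ and $\Lambda_{n,j+1}$ are disjoint and interlace on $[n-j,j]$. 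Thus $n\in f(A,j)$ if and only if $j$ is odd, and consecutive levels automatically have disjoint images. This is a qualitatively different operation from a single-element shift: at level~$j$, a block of $j$ elements of $A$ is replaced.

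Your apex choice $v_0=\Setx{1,3,\dots,2k-1}$ is also unlikely to work: the paper takes $f(Z)=\Lambda_{n,k}$, which is split across the $1$/$n$ boundary, precisely so that $\Lambda_{n,k-1}$ and $\Lambda_{n,k}$ interlace; this makes every apex pair $f((A,k-1))f(Z)$ an interlacing pair (hence an edge) with no further work. With $\Setx{1,3,\dots,2k-1}$ and a single-element shift, disjointness from $\phi_{k-1}(A)$ would already fail for many $A$. Finally, for the cross-level edges you note you would ``augment'' the old alternator by one control pair; the paper's actual argument is subtler---it first reduces the pair $A'B'$ to a maximal weakly $AB$-admissible interval $I=[d,j+1]$, introduces the notion of a ``nice'' pair to track the structure of $A_tB_t$ on $I$ versus outside $I$, and shows that $A'B'$ or its switch at $I$ is nice. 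So while the skeleton of your plan is aligned with the paper's, the key combinatorial design decisions (the ladders $\Lambda_{n,j}$, the choice of which elements to delete, and the analysis via nice pairs) are missing, and the placeholder definitions you suggest would make the cross-level and apex checks fail outright.
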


\begin{proof}
  We shall explicitly describe a homomorphism $f$ from
  $M_k(\XG(n-1,k))$ to $\XG(n,k)$. Let $A$ be a vertex of $\XG(n-1,k)$
  and let $(A,0), \dots,(A,k-1)$ be its copies in
  $M_k(\XG(n-1,k))$. In order to keep all vertex names capitalised, we
  choose to denote the vertex $z$ in the generalised Mycielski
  construction by $Z$.
  
  Suppose that $A = \Setx{a_1,\dots,a_k}$, where $a_1 < \dots <
  a_k$. 
  Let $0 \leq i \leq k$. We define the set $\Lambda_{n,i}\subseteq [n]$
  as follows:
  \begin{equation*}
    \Lambda_{n,i} =
    \begin{cases}
      \Setx{n-i+1,n-i+3,\dots,n}\cup\Setx{2,4,\dots,i-1} & \text{if $i$ is odd,}\\
      \Setx{n-i+1,n-i+3,\dots,n-1}\cup\Setx{1,3,\dots,i-1} & \text{if $i$
        is even.}
    \end{cases}
  \end{equation*}
  Thus, for instance, $\Lambda_{n,0}=\emptyset$, $\Lambda_{n,1}=\Setx{n}$
  and $\Lambda_{n,2}=\Setx{1,n-1}$.
  
  We will now define a map $f:\,V(M_k(\XG(n-1,k))) \to V(\XG(n,k))$.
  Given a vertex $A$ of $\XG(n-1,k)$ and an integer $j \in [k]$, let
  $A^j=[d,j]_A$, where $d$ is the maximum integer such that
  $|[d,j]_A|=j$. Furthermore, let $A^0 = \emptyset$. We set
  \begin{align*}
    f:\,(A,j) &\mapsto (A \setminus A^j) \cup \Lambda_{n,j},
                \text{ where $0\leq j\leq k-1$,} \\
      Z     &\mapsto \Lambda_{n,k}.
  \end{align*}
  
  Note that the image of $f$ is contained in the vertex set of
  $\XG(n,k)$. Informally, $f(A,j)$ can be seen as the result of the
  following process: viewing $A$ as a subset of $V(C_n)$, $A^j$
  consists of the $j$ elements of $A$ that are closest to $j$
  counterclockwise; push them clockwise in such a way that the first
  one stops at $j$ and the remaining ones are tightly packed (still
  forming an independent set), and rotate them back by one
  element. The other $k-j$ elements of $A$ are not affected.

  To verify that $f$ is a homomorphism, it is enough to check that $f$
  maps edges of $M_k(\XG(n-1,k))$ to edges of $\XG(n,k)$. Fix an
  arbitrary edge $AB$ of $\XG(n-1,k)$, and let
  $C\cup D \subseteq [n-1]$ be the standard $AB$-alternator. Let
  $\Set{\opair {c_i}{d_i}}{i\in [m]}$ be its set of control pairs.

  We will show that $f$ maps the edges $(A,0)(B,0)$,
  $(A,j)(B,j+1)$ (for any $j \in [0,k-1]$), as well as
  $(A,k-1)Z$, to edges of $\XG(n,k)$, by finding an appropriate
  alternator $C'\cup D'$.
  
  First, consider the edge $(A,0)(B,0)$ of $M_k(\XG(n-1,k))$.
  Since $f((A,0))=A$ and $f((B,0))=B$, the required alternator is
  obtained by taking $C'=C$ and $D'=D$. (Note that the definition is
  still satisfied if $A$ and $B$ are viewed as vertices of $\XG(n,k)$
  rather than $\XG(n-1,k)$.)

  Edges of type $(A,k-1)Z$ are another easy case: we have
  $f(Z) = \Lambda_{n,k}$ and $f((A,k-1))$ contains $\Lambda_{n,k-1}$
  as a subset, which means that $f((A,k-1))f(Z)$ must actually be an
  interlacing pair, and hence an edge of $\XG(n,k)$ (with empty
  alternator).

  It remains to consider the edge $(A,j)(B,j+1)$, where
  $j\in [0,k-1]$. Let $A' = f((A,j))$ and $B' = f((B,j+1))$. The sets
  $A'$ and $B'$ are disjoint since $A\cap B = \emptyset$ and
  $\Lambda_{n,j}\cap\Lambda_{n,j+1} = \emptyset$.

  Given $r\in [0,m]$, let $A_rB_r$ be the pair obtained from $AB$ by
  switching along $([d_i,c_i])_{i\in [r]}$. Since $A_mB_m$ is
  interlacing, there is $d\in [k+1,n]$ such that $[d,j+1]$ is weakly
  $A_mB_m$-admissible. Choose $d$ as maximal with this property. By
  Observation~\ref{obs:admissible}(ii), $[d,j+1]$ is weakly
  $AB$-admissible.

  For any $i\in [m]$, we have $[d_i,c_i] \subseteq [d,j+1]$ or vice
  versa by Lemma~\ref{l:intervals}(i). If there is $t\in [m]$ such
  that $c_t < j+1$, then let $t$ be maximal with this property;
  otherwise, let $t = 0$.

  We now aim to show that the pair $A'B'$ is, in a sense, not too
  different from $A_tB_t$.

  Let $X,Y$ be disjoint vertices of the graph $H$ (that is, vertices
  such that $X\cap Y=\emptyset$), where $H$ is either $\XG(n-1,k)$ or
  $\XG(n,k)$. Let $I$ be the interval $[d,j+1] \subseteq V(C_n)$,
  where $d$ is as above. (Thus, $n\in I$ even if $H$ is $\XG(n-1,k)$.)

  Let us say that the pair $XY$ is
  \emph{nice} if the following hold:
  \begin{enumerate}[label=(N\arabic*)]
  \item $X\setminus I = A \setminus I$ and
    $Y\setminus I = B \setminus I$,
  \item the sets $X$ and $Y$ alternate on $I$ and the first element of
    $I\cap (X\cup Y)$ belongs to $X$ if and only if the first element
    of $I\cap (A\cup B)$ belongs to $A$.
  \end{enumerate}

  \begin{claim}\label{cl:nice1}
    The pair $A_tB_t$ is nice.
  \end{claim}
  \begin{claimproof}
    Condition (N1) in the definition follows from the fact that for
    each of the intervals $[d_i,c_i]$ with $i \leq t$, we have
    $c_i < j+1$ and therefore $[d_i,c_i] \subseteq I$ by
    Lemma~\ref{l:intervals}(i). Thus, switching at such intervals does
    not affect the elements outside $I$.

    Let us verify condition (N2). Since $A_mB_m$ is an interlacing
    pair and $I\subseteq [d_i,c_i]$ for any $i > t$, $A_t$ and $B_t$
    must alternate on $I$. For the rest of condition (ii), we may
    assume that $t > 0$. Let $x$ be the first element of
    $I\cap (A\cup B)$; since $A\cup B = A_t\cup B_t$, this is also the
    first element of $I\cap (A_t\cup B_t)$. By
    Lemma~\ref{l:intervals}(ii), $x$ is not contained in $[d_t,c_t]$
    (nor in any $[d_i,c_i]$ with $i < t$), and therefore $x\in A_t$ if
    and only if $x\in A$. This concludes the proof of the claim.
  \end{claimproof}

  \begin{claim}\label{cl:nice-edge}
    Any nice pair $XY$ of disjoint vertices of $\XG(n,k)$ forms an edge
    of $\XG(n,k)$.
  \end{claim}
  \begin{claimproof}
    It is clear from the definition of nice pair that $XY$ can be
    obtained from (the nice pair) $A_tB_t$ by first extending the
    underlying cycle $C_{n-1}$ to $C_n$ (just inserting the element
    $n$) and then moving the elements of $X\cup Y$ within $I$ without
    changing their order on $C_n$.

    It follows that switching along
    $([d_{t+1},c_{t+1}],\dots,[d_m,c_m])$ changes $XY$ to an
    interlacing pair, just as in the case of $A_tB_t$. (Recall that
    $I$ is a subset of each of these intervals by the choice of $t$.)
    Summing up,
    $\Setx{c_{t+1},\dots,c_m}\cup\Setx{d_{t+1},\dots,d_m} \subseteq
    [n]$ is an $XY$-alternator.
  \end{claimproof}
  
  The following claim relates the above observations to $A'B'$.

  \begin{claim}\label{cl:nice2}
    One of the following conditions holds:
    \begin{enumerate}[label=(\roman*)]
    \item $A'B'$ is a nice pair,
    \item the interval $I$ is $A'B'$-admissible and the pair $A''B''$
      obtained by switching $A'B'$ at $I$ is nice.
    \end{enumerate}
  \end{claim}
  \begin{claimproof}
    First of all, observe that since $I$ is weakly $AB$-admissible,
    both $A^j$ and $B^{j+1}$ are contained in $I$. Furthermore, both
    $\Lambda_{n,j+1}$ and $\Lambda_{n,j}$ are contained in $I$:
    indeed, the weakly $A'B'$-admissible interval $I=[d,j+1]$ must
    satisfy $d \leq n-j$, while at the same time
    $\Lambda_{n,j}\cup \Lambda_{n,j+1} = [n-j,j]$. This proves
    condition (N1) for both of the pairs involved in (i) and (ii).

    We have in fact $B^{j+1} = I\cap B$ and
    $A^j = (I\cap A)\setminus\Setx a$ for some $a\in I\cap A$. There
    are essentially three possibilities for $a$, illustrated in
    Figure~\ref{fig:homo}: if $j+1\notin A$, then $a$ is the first
    element of $[d,j+1]_A$ and it may or may not equal $d$, while if
    $j+1\in A$, then $a = j+1$.

    All the elements of $B^{j+1}$ are replaced in $B'$ by
    $\Lambda_{n,j+1}$; similarly, all the elements of $A^j$ are
    replaced in $A'$ by $\Lambda_{n,j}$. Hence, $A'$ and $B'$
    alternate on $[n-j,j]$, and therefore they alternate on $I$
    regardless of the position of the remaining element $a$ of
    $[d,j+1]_{A'\cup B'}$.

    \begin{figure}
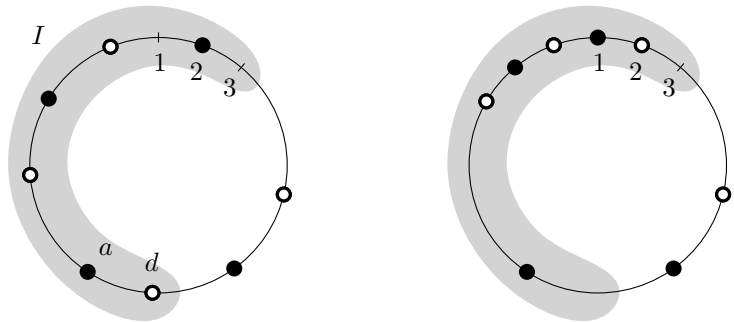
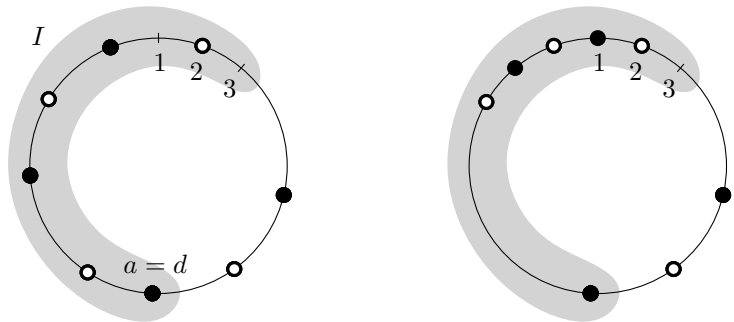
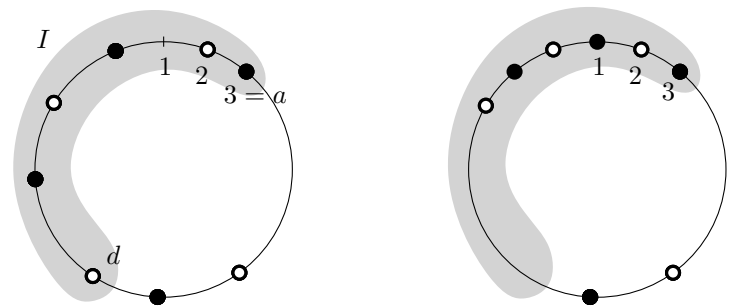

      \centering
      \subfloat[$a$ differs from both $d$ and $j+1$.]{\fig7\hspace{2cm}\fig8}\\[5mm]
      \subfloat[$a=d$.]{\fig9\hspace{2cm}\fig{10}}\\[5mm]
      \subfloat[$a=j+1$.]{\fig{11}\hspace{2cm}\fig{12}}
      \caption{Possible cases in the proof of Claim~\ref{cl:nice2},
        shown for $k=4$, $j=2$ and $AB$ interlacing. The figures on
        the left show the pair $AB$, those on the right show
        $A'B'$. Black dots represent $A$ or $A'$, white dots represent
        $B$ or $B'$, the interval $I=[d,j+1]$ is shown gray. The cases
        are distinguished by the position of the element $a$ of
        $(I\cap A)\setminus A^j$. The pair $A'B'$ is interlacing except in
        (a), in which case a switch at $[d,j+1]$ is needed to make it
        interlacing.}
      \label{fig:homo}
    \end{figure}

    If condition (N2) holds for $A'B'$, then we are done. Assume thus
    that this is not the case. We have $a\neq d$, for otherwise $a$
    would be the first element of both $I\cap (A\cup B)$ and
    $I\cap (A'\cup B')$ while $a\in A\cap A'$, implying (N2). For a
    similar reason (using the fact that $A'B'$ alternates in $I$), we
    find $a\neq j+1$. Consequently, neither $d$ nor $j+1$ belong to
    $A'$. They do not belong to $B'$ either: this is clear in the case
    of $j+1$, and $d\in B'$ would only be possible if $d = n-j$, but
    then $\size{[d,j+1]_A} = j+1$ would force $j+1\in A$ and hence
    $a = j+1$, a contradiction. We have proved that $[d,j+1]$ is
    $A'B'$-admissible.

    Let $x$ be the first element of
    $[d,j+1]_{A'\cup B'} = [d,j+1]_{A''\cup B''}$ and note that $x$
    belongs to $A'$ if and only if it belongs to $B''$. Thus,
    condition (N2) is satisfied for exactly one of the pairs $A'B'$
    and $A''B''$. This proves the claim.
  \end{claimproof}

  Let us finish the proof of the lemma. If condition (i) of
  Claim~\ref{cl:nice2} holds, then $A'B'$ is an edge of $\XG(n,k)$ by
  Claim~\ref{cl:nice-edge}. If condition (ii) holds, then we obtain an
  $A'B'$-alternator by setting $C' = C \cup \Setx{j+1}$ and
  $D' = D \cup \Setx{d}$, completing the discussion for edges of type
  $(A,j)(B,j+1)$ as well as the whole proof.
\end{proof}

We are now ready to prove that $\chi(\XG(n,k)) \geq n-2k+2$.
First, observe that if $G$, $H$ are graphs such that $G$ is
homomorphic to $H$, then $M_k(G)$ is homomorphic to $M_k(H)$.
Hence, by repeated applications of Lemma~\ref{lem:homomorphism},
the graph
\begin{equation*}
  H=M_k(M_k(\ldots M_k(\XG(2k,k)) \ldots )),
\end{equation*}
where $M_k(\cdot)$ is applied $n-2k$ times, is homomorphic to
$\XG(n,k)$. Since $\XG(2k,k)$ is isomorphic to $K_2$,
$H \in \mathcal M_{n-2k}$, so using Theorem~\ref{thm:stiebitz},
we conclude that $\chi(\XG(n,k)) \geq n-2k+2$.


\section{Criticality}
\label{sec:critical}

In this section, we prove the second part of Theorem~\ref{thm:main},
namely that $\XG(n,k)$ is edge-critical. Let $AB$ be an edge of
$\XG(n,k)$ and let $G = \XG(n,k)-AB$. We show that $G$ is
$(n-2k+1)$-colourable.

Let $C\cup D$ be the standard $AB$-alternator, where
$C = \Setx{c_1,\dots,c_m}$, $D = \Setx{d_1,\dots,d_m}$ and
\begin{equation*}
  c_1 < c_2 < \dots < c_m \leq k - 1 < d_m < d_{m-1} < \dots < d_1.
\end{equation*}
The sets $A$, $B$, $C$, $D$ are pairwise disjoint and for $j \in [m]$,
$\size{[d_i,c_i]_A} = \size{[d_i,c_i]_B} = c_i$.

Let $W = A\cup B\cup C\cup D$. We call a vertex of $G$
\emph{essential} if it is contained in $W$ and \emph{inessential}
otherwise. In our analysis, it will be sufficient to concentrate on
essential vertices since each inessential one will get a colour
special to one of its elements outside $W$, and it will be easy to see
that these colour classes are independent sets in $G$.

\begin{lemma}\label{l:disbalance}
  Suppose that $X,Y$ are disjoint vertices of $G$, $c\in [k-1]$ and
  $d\in [n]$ such that $c\neq d$. The pair $XY$ is \emph{not} an edge
  of $G$ if one of the following conditions holds:
  \begin{enumerate}[label=(\roman*)]
  \item $\size{[d,c]_X} > c$ and $\size{[d,c]_Y} < c$, or
  \item $\size{[d,c)_X} > c$ and $\size{[d,c)_Y} < c$.
  \end{enumerate}
\end{lemma}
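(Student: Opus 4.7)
The plan is to argue by contradiction. Suppose $XY$ is an edge of $G$, hence of $\XG(n,k)$, so there exists an $XY$-alternator $C''\cup D''$ whose intervals satisfy $[d''_1, c''_1] \subseteq \cdots \subseteq [d''_p, c''_p]$ by Lemma~\ref{l:intervals}(i). Let $(X_r,Y_r)$ be the pair resulting from switching $(X_{r-1},Y_{r-1})$ at $[d''_r,c''_r]$, with $(X_0,Y_0) = (X,Y)$; then $(X_p,Y_p)$ is interlacing by the definition of alternator.

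The quantity I would track is $\delta_r := \size{[d,c]_{X_r}} - \size{[d,c]_{Y_r}}$. Condition~(i) gives $\delta_0 \geq 2$. Since the elements of $X_p\cup Y_p$ alternate along $C_n$, the counts $\size{I_{X_p}}$ and $\size{I_{Y_p}}$ on any interval $I$ differ by at most one, so $|\delta_p| \leq 1$. Thus the switching must strictly decrease $|\delta|$.

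Each switch preserves $X_r\cup Y_r = X\cup Y$, so by Observation~\ref{obs:admissible}(ii) every $[d''_r,c''_r]$ is also $X_{r-1}Y_{r-1}$-admissible, giving $\size{[d''_r,c''_r]_{X_{r-1}}} = \size{[d''_r,c''_r]_{Y_{r-1}}} = c''_r$. A direct case analysis based on this identity shows that $\delta_r = \delta_{r-1}$ when $[d''_r,c''_r]$ is disjoint from or contained in $[d,c]$, and $\delta_r = -\delta_{r-1}$ when $[d,c]\subseteq [d''_r,c''_r]$. In all three cases $|\delta_r| = |\delta_{r-1}|$, so at least one step must feature a \emph{partial overlap}: $[d''_r,c''_r]$ and $[d,c]$ properly intersect without one containing the other.

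The main obstacle is to rule out such a partial overlap. The plan is to exploit that one endpoint of the intersection $[d''_r,c''_r]\cap[d,c]$ lies in $\Setx{c''_r,d''_r}$ and hence outside $X\cup Y$ by $XY$-admissibility. Combined with the surplus of $X$-elements on $[d,c]$ prescribed by condition~(i), this should yield a second weakly $XY$-admissible interval that properly crosses $[d''_r,c''_r]$, contradicting the nestedness guaranteed by Lemma~\ref{l:intervals}(i). Condition~(ii) is handled by the same argument with $[d,c)$ in place of $[d,c]$.
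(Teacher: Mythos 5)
Your argument for the ``nested'' case---tracking $\delta_r = \size{[d,c]_{X_r}} - \size{[d,c]_{Y_r}}$, showing $|\delta_r|$ is invariant when $[d''_r,c''_r]$ and $[d,c]$ are nested, and concluding from $|\delta_0|\geq 2$ and $|\delta_p|\leq 1$ that some alternator interval must partially overlap $[d,c]$---is essentially the paper's argument (the paper works with $\beta = |\delta|$ directly). That part is sound.

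The gap is in how you propose to rule out the partial overlap, which is the crux of the lemma. You write that one endpoint of $[d''_r,c''_r]\cap[d,c]$ lying outside $X\cup Y$, ``combined with the surplus of $X$-elements on $[d,c]$\ldots should yield a second weakly $XY$-admissible interval that properly crosses $[d''_r,c''_r]$.'' But the surplus condition $\size{[d,c]_X}>c$, $\size{[d,c]_Y}<c$ precisely says $[d,c]$ is \emph{not} balanced, and there is no obvious sub- or super-interval of $[d,c]$ that you can show is weakly $XY$-admissible (you would need $\size{I_X}=\size{I_Y}$ for the candidate $I$, and nothing in the data gives you that). Without such an interval you cannot invoke Lemma~\ref{l:intervals}(i), and the proof stalls.

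The paper's actual mechanism is different and does not involve a second admissible interval at all. Taking (say) the case $c''_r\in[d,c]$, $d''_r\notin[d,c]$, it combines $\size{[d,c]_X}>c$ with the weak admissibility $\size{[d''_r,c''_r]_X}=c''_r$ to get $\size{(c''_r,c]_X}\geq c+1-c''_r$, and then uses the fact that $X$ is an \emph{independent set in $C_n$}, so that $\size{(c''_r,c]_X}\leq (c-c''_r+1)/2$. These two bounds together force $c<c''_r$, contradicting $c''_r\in[d,c]$. (The case $d''_r\in[d,c]$, $c''_r\notin[d,c]$ is symmetric, using $Y$.) The independence of $X$ and $Y$ in $C_n$ --- i.e.\ that these are Schrijver-graph vertices, not merely Kneser-graph vertices --- is the ingredient your sketch misses, and it is exactly what makes the overlap impossible.
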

\begin{proof}
  Assume condition (i). For the sake of a contradiction, assume that
  $XY$ is an edge of $G$, and consider the standard $XY$-alternator
  $C'\cup D'$, where $C' = \Setx{c'_1,\dots,c'_\ell}$ and
  $D' = \Setx{d'_1,\dots,d'_\ell}$.

  Suppose first that
  \begin{equation}\label{eq:comparable}
    \text{for each } j\in [\ell], [d'_j,c'_j] \subseteq [d,c]\text{ or
      vice versa.}
  \end{equation}

  For $0 \leq j \leq\ell$, let $X_jY_j$ be the result of switching
  $XY$ along $([d'_i,c'_i]_{i\in [j]})$. Let
  \begin{equation*}
    \beta(X_jY_j) = \left| \size{[d,c]_{X_j}} - \size{[d,c]_{Y_j}} \right|. 
  \end{equation*}
  Since $X_\ell Y_\ell$ is an interlacing pair, we have
  $\beta(X_\ell Y_\ell) \leq 1$. We claim that for $j > 0$, it holds
  that $\beta(X_jY_j) = \beta(X_{j-1}Y_{j-1})$. This is clear if
  $[d,c] \subseteq [d'_j,c'_j]$, for then the effect of the switch at
  $[d'_j,c'_j]$ within $[d,c]$ is just to interchange membership in
  $X_{j-1}$ and $Y_{j-1}$. On the other hand, if
  $[d'_j,c'_j] \subseteq [d,c]$, then $[d'_j,c'_j]$ is
  $X_{j-1}Y_{j-1}$-admissible by Observation~\ref{obs:admissible}(ii),
  and therefore $[d,c]_{X_j} = [d,c]_{X_{j-1}}$ and similarly
  $[d,c]_{Y_j} = [d,c]_{Y_{j-1}}$. The claim follows.

  Since $XY = X_0Y_0$, we have shown that $\beta(XY)\leq 1$. This
  contradiction with condition (i) implies that our
  assumption~\eqref{eq:comparable} does not hold.

  Thus, let $j$ be the least index such that
  $\size{[d,c] \cap \Setx{c'_j,d'_j}} = 1$.

  Suppose that $d'_j\notin [d,c]$. Since $\size{[d,c]_X} > c$ and
  $c'_j\in [d,c]$, we have $\size{[d'_j,c]_X} > c$. On the other hand,
  $\size{[d'_j,c'_j]_X} = c'_j$, and thus
  \begin{equation}
    \label{eq:1}
    \size{(c'_j,c]_X} \geq c + 1 - c'_j.
  \end{equation}
  Since $X$ is an independent set in $C_n$, we have
  $\size{(c'_j,c]_X} \leq (c-c'_j+1)/2$. Combining this
  with~\eqref{eq:1}, we derive $c < c'_j$, a contradiction with the
  assumption that $c'_j\in I$.

  The argument for the case $c'_j\notin [d,c]$ is similar. Analogously
  to~\eqref{eq:1}, we find that $\size{(c,c'_j]_Y} \geq c'_j - c +
  1$. On the other hand, $(c,c'_j]_Y$ is independent and thus its size
  is at most $(c'_j-c)/2$, an improvement by $1/2$ coming from the
  fact that $c'_j\notin Y$ as $c'_j$ is a control element for $XY$. As
  a consequence, the resulting bound $c > c'_j + 1$ is even stronger
  than its analogue in the preceding case.

  A similar computation works for condition (ii).
\end{proof}

Throughout the following discussion, let $X$ be a $k$-element subset
of $W$. Let $i\leq m$. We say that $X$ is \emph{heavy} on $[d_i,c_i)$
if $\size{[d_i,c_i)_X} > c_i$. Furthermore, $X$ is \emph{light} or
\emph{balanced} on $[d_i,c_i)$ if $\size{[d_i,c_i)_X}$ is smaller
than or equal to $c_i$, respectively. These notions are also defined for
intervals $(d_i,c_i]$ or $[d_i,c_i]$ in the obvious way.

We say that $X$ is \emph{balanced} if it is balanced on every interval
$[d_i,c_i)$ and $(d_i,c_i]$, where $1\leq i \leq m$. The set $X$ is
\emph{regular} if it is balanced and contained in $A\cup B$. Note that
$A$ and $B$ are regular.

Let us say that $X$ is \emph{min-heavy} on $[d_i,c_i)$
($1\leq i \leq m$) if it is heavy on $[d_i,c_i)$ and not heavy on any
interval $[d_j,c_j)$ nor $(d_j,c_j]$ with $j < i$. Similarly, $X$ is
\emph{max-light} on $[d_i,c_i)$ if it is light on this interval and
not light on any $[d_j,c_j)$ nor $(d_j,c_j]$ with $j > i$. Being
min-heavy or max-light on the interval $(d_i,c_i]$ is defined in an
analogous manner.

A \emph{balanced pair} in $X$ is a pair $\opair{c_i}{d_i}$
($1\leq i \leq m$) such that $\Setx{c_i,d_i} \subseteq X$ and $X$ is
balanced on $[d_i,c_i)$ (and therefore also on $(d_i,c_i]$).

\begin{proposition}\label{p:irregular}
  Let $X$ be a $k$-element subset of $W$. If $X$ is not regular, then
  there exists $i\in[m]$ satisfying one of the following:
  \begin{enumerate}[label=(\alph*)]
  \item $\Setx{c_i,d_i}$ is a balanced pair in $X$,
  \item $X$ is min-heavy on $[d_i,c_i)$ or on $(d_i,c_i]$,
  \item $X$ is max-light on $[d_i,c_i)$ or on $(d_i,c_i]$.
  \end{enumerate}
\end{proposition}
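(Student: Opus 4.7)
The plan is a case split on whether $X$ is balanced on every interval $[d_i,c_i)$ and $(d_i,c_i]$ with $i\in[m]$.

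Suppose first that $X$ is balanced on all these intervals. Since $X$ is not regular but is balanced, we must have $X\not\subseteq A\cup B$, so $X\cap(C\cup D)\neq\emptyset$. Pick any $x\in X\cap(C\cup D)$ and let $i\in[m]$ be the index with $x\in\Setx{c_i,d_i}$. The key identity comes from double-counting $\size{[d_i,c_i]_X}$: writing $[d_i,c_i]$ as the disjoint union $[d_i,c_i)\cup\Setx{c_i}$ and again as $\Setx{d_i}\cup(d_i,c_i]$, and using the balance conditions $\size{[d_i,c_i)_X}=\size{(d_i,c_i]_X}=c_i$, we obtain
\begin{equation*}
  c_i+\mathbf{1}_{c_i\in X} \;=\; \size{[d_i,c_i]_X} \;=\; \mathbf{1}_{d_i\in X}+c_i.
\end{equation*}
Hence $c_i\in X$ iff $d_i\in X$, and together with $x\in X$ this forces $\Setx{c_i,d_i}\subseteq X$. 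Therefore $\opair{c_i}{d_i}$ is a balanced pair in $X$, giving option (a).

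Otherwise, $X$ is heavy or light on some interval of the form $[d_j,c_j)$ or $(d_j,c_j]$. If $X$ is heavy on at least one such interval, let $i$ be the smallest index for which $X$ is heavy on $[d_i,c_i)$ or $(d_i,c_i]$; by minimality $X$ is not heavy on any $[d_j,c_j)$ or $(d_j,c_j]$ with $j<i$, so $X$ is min-heavy on whichever of the two intervals witnesses the heaviness, giving (b). If $X$ is never heavy but is light on some such interval, take $i$ to be the largest index for which $X$ is light on $[d_i,c_i)$ or $(d_i,c_i]$; by maximality $X$ is not light on any later such interval, so $X$ is max-light on the corresponding interval, giving (c).

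There is no serious obstacle: the only non-routine step is the double-counting identity in the balanced case, which essentially encodes the parenthetical remark already implicit in the definition of a balanced pair. Everything else is the extremal argument of picking the smallest (respectively largest) index in $[m]$ where balance fails.
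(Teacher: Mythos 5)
Your proof is correct and follows essentially the same approach as the paper: an extremal choice of index in the unbalanced case to obtain min-heavy or max-light, and in the balanced case, showing that membership of $c_i$ and $d_i$ in $X$ is equivalent so that both lie in $X$. Your double-counting of $\size{[d_i,c_i]_X}$ is a slightly more symmetric way of packaging the same calculation the paper performs (the paper fixes $d_\ell\in X$, derives $c_\ell\in X$, and says ``a symmetric argument works in the other case'').
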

\begin{proof}
  Suppose that $X$ is not regular. If there exists $j\in [m]$ such
  that $X$ is heavy or light on $[d_j,c_j)$ or $(d_j,c_j]$, then an
  index $i$ satisfying (b) or (c) can be obtained by making an
  appropriate extremal choice of $j$. We can thus assume that $X$ is
  balanced.

  Since $X$ is not regular, it contains an element from $C\cup D$ ---
  say, $d_\ell\in X$. (A symmetric argument works in the other case.)
  Being balanced, $X$ contains $c_\ell$ elements of $[d_\ell,c_\ell)$,
  and therefore $\size{(d_\ell,c_\ell)_X} = c_\ell-1$. Since
  $\size{(d_\ell,c_\ell]_X} = c_\ell$, we have $c_\ell\in X$. Thus,
  $\Setx{c_\ell,d_\ell}$ is a balanced pair in $X$.
\end{proof}

For convenience, we set $d_0 = c_1$, $c_0 = d_1$, $d_{m+1} = c_m$ and
$c_{m+1} = d_m$. For $i\in [m+1]$, we define
\begin{equation*}
  U_i = (d_i,d_{i-1})_W\cup (c_{i-1},c_i)_W.
\end{equation*}
Note that for each $i$, $U_i\subseteq A\cup B$ and $\size{U_i}$ is
even, namely
\begin{equation*}
  \size{U_i} =
  \begin{cases}
    2c_1 & \text{if $i = 1$,} \\
    2(k-c_m) & \text{if $i = m+1$,} \\
    2(c_i - c_{i-1}) & \text{otherwise.}
  \end{cases}
\end{equation*}

\begin{proposition}\label{p:half}
  Let $X\subseteq W$ and $i\in [m]$.
  \begin{enumerate}[label=(\roman*)]
  \item If $X$ is min-heavy on $[d_i,c_i)$ (respectively,
    $(d_i, c_i]$), then either $i > 1$ and $\Setx{c_{i-1},d_{i-1}}$ is
    a balanced pair in $X$, or $X$ contains more than half of the
    elements in the set $U_i\cup\Setx{d_i}$ (respectively,
    $U_i\cup\Setx{c_i}$).
  \item If $X$ is max-light on $[d_i,c_i)$ (respectively,
    $(d_i, c_i]$), then either $i < m$ and $\Setx{c_{i+1},d_{i+1}}$ is
    a balanced pair in $X$, or $X$ contains more than half of the
    elements in the set $U_{i+1}\cup\Setx{c_i}$ (respectively,
    $U_{i+1}\cup\Setx{d_i}$).
  \end{enumerate}
\end{proposition}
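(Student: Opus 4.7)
The plan is to convert each case of the proposition into a single counting identity relating $|X \cap (U_j \cup \{\cdot\})|$ to $|[d_i,c_i)_X|$ (or $|(d_i,c_i]_X|$), and then argue by cases according to whether a critical endpoint of the neighboring interval lies in $X$. All four cases (min-heavy or max-light, on $[d_i,c_i)$ or $(d_i,c_i]$) follow the same blueprint, so I will describe part (i) for min-heavy on $[d_i,c_i)$ in detail.

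Assume $i > 1$. The clockwise ordering $c_1 < \cdots < c_m < d_m < \cdots < d_1$ gives the disjoint decomposition
\begin{equation*}
  [d_i,c_i) \;=\; [d_i,d_{i-1}) \,\sqcup\, [d_{i-1},c_{i-1}] \,\sqcup\, (c_{i-1},c_i),
\end{equation*}
and a quick check using $d_i \in W$ shows that $U_i \cup \{d_i\}$ is exactly the $W$-restriction of the first and third pieces combined. Since $X \subseteq W$, this yields the identity
\begin{equation*}
  |X \cap (U_i \cup \{d_i\})| \;=\; |[d_i,c_i)_X| - |[d_{i-1},c_{i-1}]_X|.
\end{equation*}
By min-heaviness $|[d_i,c_i)_X| \geq c_i + 1$, and $|U_i \cup \{d_i\}| = 2(c_i - c_{i-1}) + 1$, so the required ``more than half'' bound reduces to proving $|[d_{i-1},c_{i-1}]_X| \leq c_{i-1}$ whenever $\{c_{i-1},d_{i-1}\}$ is not a balanced pair in $X$.

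For that bound, min-heaviness also tells us that $X$ is not heavy on either $[d_{i-1},c_{i-1})$ or $(d_{i-1},c_{i-1}]$, so both counts are at most $c_{i-1}$. A case split on membership of $c_{i-1}$ and $d_{i-1}$ in $X$ finishes it: if one of these endpoints is absent, $|[d_{i-1},c_{i-1}]_X|$ coincides with one of the two not-heavy counts; if both endpoints lie in $X$, ``not a balanced pair'' forces $|[d_{i-1},c_{i-1})_X| \leq c_{i-1} - 1$ and hence $|[d_{i-1},c_{i-1}]_X| \leq c_{i-1}$. The degenerate case $i = 1$ is handled directly: with the conventions $d_0 = c_1$, $c_0 = d_1$ one has $U_1 \cup \{d_1\} = [d_1,c_1)_W$, and min-heaviness alone suffices.

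Part (ii) follows the same scheme, and I expect the only real subtlety to be a single extra indicator term. The corresponding expansion for max-light on $[d_i,c_i)$ with $i < m$ reads
\begin{equation*}
  |X \cap (U_{i+1} \cup \{c_i\})| \;=\; |[d_{i+1},c_{i+1})_X| - |[d_i,c_i)_X| - [d_{i+1} \in X],
\end{equation*}
the last term coming from the fact that $U_{i+1}$ excludes $d_{i+1}$. If $d_{i+1} \notin X$, the not-light bound on $[d_{i+1},c_{i+1})$ combined with max-lightness of $[d_i,c_i)$ is enough. Otherwise, the lost unit must be recovered from the non-balancedness of $\{c_{i+1},d_{i+1}\}$: if $c_{i+1} \in X$, ``not balanced'' directly upgrades $|[d_{i+1},c_{i+1})_X| \geq c_{i+1}$ to $\geq c_{i+1} + 1$; if $c_{i+1} \notin X$, the same upgrade comes via $|[d_{i+1},c_{i+1})_X| = |(d_{i+1},c_{i+1})_X| + 1$ together with the not-light inequality on $(d_{i+1},c_{i+1}]$. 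The edge case $i = m$ uses only the identity $|[d_m,c_m)_X| + |[c_m,d_m)_X| = k$, and the $(d_i,c_i]$ sub-cases are symmetric.
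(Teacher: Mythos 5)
Your argument is correct and follows essentially the same route as the paper: decompose $[d_i,c_i)$ (or the relevant interval) across the neighboring control interval, use the defining inequality from min-heavy/max-light, and exploit that min-heaviness (resp.\ max-lightness) forces non-heaviness (resp.\ non-lightness) on $[d_{i\mp1},c_{i\mp1})$ and $(d_{i\mp1},c_{i\mp1}]$ to pin down the balanced pair. The paper states this as a direct implication (``if $X$ does not contain more than half, then the balanced pair exists'') while you argue the logically equivalent contrapositive, but the counting identities and case splits are the same.
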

\begin{proof}
  We prove (i) only for the case of $X$ min-heavy on $[d_i,c_i)$ since
  the other case is completely analogous. For $i = 1$, the claim is
  trivially true since $X$ is heavy on
  $[d_1,c_1] = U_1\cup\Setx{d_1}$. Suppose then that $i > 1$.

  Since $X$ is heavy on $[d_i,c_i)$,
  $\size{[d_i,c_i)_X} \geq c_i + 1$. Let us assume that $X$ contains
  less than half of the elements of the (odd-sized) set
  $U_i\cup\Setx{d_i}$ --- that is, $\size{X\cap (U_i\cup\Setx{d_i})}
  \leq c_i - c_{i-1}$. Hence
  \begin{equation}\label{eq:heavy}
    \size{[d_{i-1},c_{i-1}]_X} \geq c_{i-1} + 1.
  \end{equation}
  On the other hand, $X$ is heavy on neither $[d_{i-1},c_{i-1})$ nor
  $(d_{i-1},c_{i-1}]$, so $\size{[d_{i-1},c_{i-1})_X} \leq c_{i-1}$ and
  $\size{(d_{i-1},c_{i-1}]_X} \leq c_{i-1}$. Comparing
  with~\eqref{eq:heavy}, we see that $c_{i-1},d_{i-1}\in
  X$. Furthermore, $\size{[d_{i-1},c_{i-1})_X} = c_{i-1}$, so
  $\Setx{c_{i-1},d_{i-1}}$ is a balanced pair in $X$.

  The proof of (ii) is similar and we only comment on the case of $X$
  max-light on $[d_i,c_i)$ and $i < m$. We have
  $\size{[d_i,c_i)_X} \leq c_i - 1$. If $X$ contains less than half of
  the elements in $U_{i+1}\cup\Setx{c_i}$, then
  $\size{(d_{i+1},c_{i+1})_X} \leq (c_i-1) + (c_{i+1}-c_i) =
  c_{i+1}-1$. However, $X$ is not light on $[d_{i+1},c_{i+1})$ nor on
  $(d_{i+1},c_{i+1}]$, so $c_{i+1},d_{i+1}\in X$ and
  $\size{[d_{i+1},c_{i+1})_X} = c_{i+1}$. It follows that
  $\Setx{c_{i+1},d_{i+1}}$ is a balanced pair in $X$.
\end{proof}

For $i\in [m+1]$, a set $X\subseteq W$ is \emph{skew at $d_i$} if $X$
contains the largest element of $(d_{i+1},d_i)_W$ and the second
smallest element of $(d_i,d_{i-1})_W$. (By Lemma~\ref{l:placement}(i),
each of the latter two sets contains at least two elements.)

Let $X\subseteq W$ be a vertex of $G$ and let $d\in [k,n]$. By
Lemma~\ref{l:depth}, there is at most one element $c\in [k-1]$ such
that $\size{[d,c]_X} = c$ and $c\notin X$. If such an element exists,
we call it the \emph{depth of $d$ in $X$} and define
$\delta(X,d) = c$; otherwise, we let $\delta(X,d) = k$.

This notion will only be used for vertices $X$ containing a
$W$-consecutive pair. For such a vertex, let $\opair s {s'}$ be a
$W$-consecutive pair in $X$ with $s$ as large as possible. (The choice
is not really essential, but we specify it to make the definition
unambiguous.) The \emph{depth $\delta(X)$ of $X$} is defined as
$\delta(X,s')$.

\begin{lemma}\label{l:depth-control}
  If $XY$ is an edge of $G$ with $X,Y\subseteq W$ and $X$ contains a
  $W$-consecutive pair, then $\delta(X)$ is one of the control
  elements for $XY$. In particular, $\delta(X) < k$ and
  $\delta(X)\notin X\cup Y$.
\end{lemma}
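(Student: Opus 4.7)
The plan is to identify $\delta(X)$ explicitly with one of the control elements of the standard $XY$-alternator $C'\cup D'$, which exists because $XY\in E(G)\subseteq E(\XG(n,k))$. Let $\opair{s}{s'}$ be the $W$-consecutive pair in $X$ fixed by the definition of $\delta(X)$, so that $s,s'\in X$ and $s'\in[k,n]$. Since $X\cup Y\subseteq W$, the assumption $(s,s')_W=\emptyset$ strengthens automatically to $(s,s')_{X\cup Y}=\emptyset$, and so $\opair{s}{s'}$ is also an $(X\cup Y)$-consecutive pair with both endpoints in $X$.

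Next, I would invoke the algorithmic description of the alternator from Section~\ref{sec:definition} (which combines both parts of Lemma~\ref{l:placement}: part (ii) forces the cardinality to be odd, while part (i) caps it at one): any same-side $(X\cup Y)$-consecutive pair lying in $[k,n]$ carries exactly one element of $D'$ in its open interior. Applying this to $\opair{s}{s'}$ yields a unique $d'_j\in D'\cap(s,s')$ with control partner $c'_j$, and $[d'_j,c'_j]$ is $XY$-admissible. Since $(s,s')_X\subseteq (s,s')_{X\cup Y}=\emptyset$ and $s'\in X$, for every $c\in[k-1]$ we have $[d'_j,c]_X=[s',c]_X$; in particular $\size{[s',c'_j]_X}=c'_j$ and $c'_j\notin X$. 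Lemma~\ref{l:depth} applied at $d=s'$ now identifies $c'_j$ as \emph{the} unique element of $[k-1]$ with this property, so $\delta(X)=\delta(X,s')=c'_j$ is indeed a control element for $XY$. The ``in particular'' clause follows, because $c'_j\in[k-1]$ and $c'_j\notin X\cup Y$ by the $XY$-admissibility of $[d'_j,c'_j]$.

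The main obstacle I anticipate is the clean invocation of the ``exactly one $D'$-element in $(s,s')$'' consequence, which really requires knowing $s,s'\in[k,n]$ so that the counting arguments of Section~\ref{sec:definition} apply; once that is in place, the rest is essentially bookkeeping via Lemma~\ref{l:depth}. Observe also that the maximality of $s$ in the definition of $\delta(X)$ plays no substantive role in the argument above: any $W$-consecutive pair in $X$ would produce the same $c'_j$ via the identity $[d'_j,c]_X=[s',c]_X$, which is consistent with the earlier remark that the choice was specified only to make the definition unambiguous.
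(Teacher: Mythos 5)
Your proof is correct and essentially the same as the paper's: both locate a $W$-consecutive pair $\opair{s}{s'}$ in $X$, invoke Lemma~\ref{l:placement} on the edge $XY$ to find a control pair $\opair{c'_j}{d'_j}$ with $d'_j\in(s,s')$, and compute $\size{[s',c'_j]_X}=c'_j$ with $c'_j\notin X$ to conclude $\delta(X)=c'_j$. The only cosmetic difference is that the paper pins $d'_j$ down as $s'-1$ using the standard-alternator convention, whereas you bypass this by noting $(s,s')_X=\emptyset$ directly, which yields the same identity $[d'_j,c]_X=[s',c]_X$.
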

\begin{proof}
  Let $\opair s {s'}$ be a $W$-consecutive pair in $X$ with
  $\delta(X) = \delta(X,s')$. Let
  $\Set{\opair{c'_i}{d'_i}}{i\in [\ell]}$ be the set of control pairs
  of the standard $XY$-alternator. By Lemma~\ref{l:placement}(ii)
  (applied to the edge $XY$), we must have $d'_i = s'-1$ for some
  $i\in [\ell]$, in which case
  $\size{[s',c'_i)_X} = \size{[d'_i,c'_i)_X} = c'_i$. Furthermore,
  $c'_i\notin X$, so $c'_i$ is the depth of $s'$ in $X$, i.e.,
  $c'_i = \delta(X)$. The lemma follows.
\end{proof}

\begin{lemma}
  \label{l:depth-indep}
  Let $X,Y\subseteq W$ be vertices of $G$ such that
  $X\cap Y = \emptyset$, each of $X$ and $Y$ contains a
  $W$-consecutive pair, and $\delta(X) = \delta(Y)$. Then $X$ and $Y$
  are non-adjacent in $G$.
\end{lemma}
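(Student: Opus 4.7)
The plan is to derive a contradiction from the assumption that $XY$ is an edge of $G$. Let $\opair{s}{s'}$ and $\opair{t}{t'}$ be the $W$-consecutive pairs in $X$ and $Y$, respectively, with $s$ and $t$ as large as possible, so that $\delta(X) = \delta(X,s')$ and $\delta(Y) = \delta(Y,t')$. Let $C' \cup D'$ be the standard $XY$-alternator, and write its control pairs as $\Set{\opair{c'_i}{d'_i}}{i \in [\ell]}$.

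The heart of the argument is a refinement of Lemma~\ref{l:depth-control}: not merely is $\delta(X)$ equal to \emph{some} control element $c'_p$, but its companion satisfies $d'_p = s' - 1$. This is precisely what the proof of Lemma~\ref{l:depth-control} yields. Indeed, Lemma~\ref{l:placement}(ii) applied to the $(X \cup Y)$-consecutive pair $\opair{s}{s'}$ forces $\size{(s,s')_{C' \cup D'}}$ to be odd, while the standardness condition $d'_i + 1 \in X \cup Y$, combined with $(s,s')_{X \cup Y} = \emptyset$, implies that any $d'_i$ lying in $(s,s')$ must equal $s' - 1$; admissibility of the resulting $[d'_p, c'_p]$ together with Lemma~\ref{l:depth} then identifies $c'_p$ as $\delta(X, s')$. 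Running the same argument for $Y$ produces an index $q$ with $d'_q = t' - 1$ and $c'_q = \delta(Y)$.

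Since $\delta(X) = \delta(Y)$, we get $c'_p = c'_q$, and because the control elements form a strictly increasing sequence, this forces $p = q$. Hence $d'_p = d'_q$, yielding $s' = t'$; but this contradicts $s' \in X$, $t' \in Y$, and $X \cap Y = \emptyset$. The only non-routine point is extracting from the proof of Lemma~\ref{l:depth-control} the finer statement that $\delta(X)$ is paired with the specific $d' = s' - 1$ (rather than some unspecified $d'_i$); once that observation is in hand, the rest of the argument is a straightforward injectivity argument on the control pairs.
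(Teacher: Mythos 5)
Your proof is correct, but it takes a genuinely different route from the paper's. The paper's argument, after using Lemma~\ref{l:depth-control} to reduce to $\delta := \delta(X) = \delta(Y) \leq k-1$, never touches the $XY$-alternator directly: it assumes WLOG $t'<s'$ (hence $t'<s$), computes $\size{[s,\delta)_X}=\delta+1$ (since $(s,s')_X=\emptyset$ and $s\in X$) and $\size{[s,\delta)_Y}\leq\delta-1$ (since $t'\in[t',s)_Y$), and then invokes Lemma~\ref{l:disbalance}(ii) to forbid the edge $XY$. Your argument instead supposes $XY\in E(G)$ and extracts from the \emph{proof} of Lemma~\ref{l:depth-control} --- not just its statement --- the sharper fact that $\opair{\delta(X)}{s'-1}$ and $\opair{\delta(Y)}{t'-1}$ are both control pairs of the standard $XY$-alternator; since the $c'_i$ are strictly increasing, equal depths force equal indices, hence $s'=t'$, contradicting disjointness. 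Both routes are sound. The paper's version needs only the stated conclusion of Lemma~\ref{l:depth-control} together with the previously established Lemma~\ref{l:disbalance}, whereas yours sidesteps the disbalance machinery entirely and is structurally more transparent (equal-depth disjoint vertices would have to share the element $s'=t'$), at the price of reopening an earlier proof to harvest the pairing $d'_p=s'-1$, which the lemma's statement does not record.
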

\begin{proof}
  Let $\delta = \delta(X)$. By Lemma~\ref{l:depth-control}, we may
  assume that $\delta \leq k-1$. Thus let $\opair s {s'}$ be a
  $W$-consecutive pair in $X$ with $\delta(X) = \delta(X,s')$, and
  similarly let $\opair t {t'}$ be such a pair in $Y$. By symmetry, we
  may assume that $t' < s'$, and therefore $t' < s$. By the definition
  of depth, $\size{[s',\delta)_X} = \size{[t',\delta)_Y} =
  \delta$. Since $t' < s$, we have $\size{[s,\delta)_Y} \leq \delta-1$
  while $\size{[s,\delta)_X} = \delta + 1$. Lemma~\ref{l:disbalance}
  implies that $X$ and $Y$ are non-adjacent.
\end{proof}

We are now ready to define a colouring of $G$ using the following set
of colours:
\begin{equation*}
  \Set{\col i}{i\in [n]\setminus (A\cup B)}\cup \Setx{\col 0}.
\end{equation*}
Since $\size A = \size B = k$, the total number of colours is
$n-2k+1$.

From this point on, we drop the assumption that $X\subseteq W$. A
vertex $X$ of $G$ is assigned a colour by the following rules, in the
stated order of precedence:
\begin{enumerate}[label=(R\arabic*)]
\item\label{rule:iness} If $X$ is inessential and therefore contains
  an element of $[n]\setminus W$, then it gets colour $\col j$, where
  $j$ is the least such element.
\item\label{rule:pair} If $X$ contains a balanced pair, then $X$ gets colour
  $\col{c_i}$, where $i\in [m]$ is least such that $\Setx{c_i,d_i}$ is
  a balanced pair in $X$.
\item\label{rule:ccw} If $X$ is min-heavy or max-light on $(d_i,c_i]$ for some
  $i\in [m]$, then $X$ gets colour $\col{c_i}$.
\item\label{rule:cw} If $X$ is min-heavy or max-light on $[d_i,c_i)$ for some
  $i\in [m]$, then $X$ gets colour $\col{d_i}$.
\item\label{rule:depth} If $X$ contains a $W$-consecutive pair and
  $\delta(X) = j$, then $X$ gets colour $\col j$ if $j\in
  [k-1]\setminus (A\cup B)$, and colour $\col 0$ otherwise (that is, if
  $j = k$ or $j \in [k-1]\cap (A\cup B)$).
\item\label{rule:skew} If $X$ is skew at $d_i$ for some $i\in [m]$,
  then $X$ gets colour $\col{d_i}$, where $i$ is least with this
  property.
\item\label{rule:zero} If none of the above applies, $X$ gets colour
  $\col{0}$.
\end{enumerate}

We will now show that each colour class of this colouring is an
independent set in $G$.

\begin{proposition}
  \label{p:colouring}
  Rules \ref{rule:iness}--\ref{rule:zero} determine a valid colouring
  of $G$.
\end{proposition}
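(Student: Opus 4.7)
The plan is to verify that each colour class of the colouring defined by Rules~(R1)--(R7) is an independent set in $G$. Taking two vertices $X,Y$ sharing a colour, I would organize the argument by the rule that assigned each. The three main tools are direct non-disjointness, Proposition~\ref{p:half} combined with pigeonhole, and the depth and disbalance lemmas (Lemmas~\ref{l:disbalance}, \ref{l:depth-control} and~\ref{l:depth-indep}).

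The cleanest classes come from Rules~(R1), (R2) and (R6): two vertices sharing the rule's colour always contain a concrete common element of $[n]$ --- the index $j$, the pair $\{c_i,d_i\}$, or the two elements specified by the skew condition --- so they are not disjoint and therefore non-adjacent in $\KG(n,k) \supseteq G$. Rules~(R3) and (R4) are handled via Proposition~\ref{p:half}: two vertices receiving $\col{c_i}$ through (R3) are both min-heavy or max-light on $(d_i,c_i]$; since earlier rules did not fire, neither contains a balanced pair, so Proposition~\ref{p:half} forces each to contain more than half of a common odd-sized set $U_i \cup \{c_i\}$ or $U_{i+1} \cup \{c_i\}$, and pigeonhole supplies a common element. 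The symmetric argument, with $[d_i, c_i)$ and $U_i \cup \{d_i\}$ or $U_{i+1} \cup \{d_i\}$, handles (R4).

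Rule~(R5) between two essential vertices is immediate from Lemma~\ref{l:depth-indep}. The cross-rule case, where $X$ is coloured $\col j$ by (R1) and $Y$ by (R5) with $\delta(Y)=j$, requires a direct analysis: assuming $XY$ is an edge of $G$, the standard $XY$-alternator would have $j$ among its control elements (by an argument analogous to Lemma~\ref{l:depth-control} extended to the inessential $X$), which is impossible because control elements lie outside $X\cup Y$ by admissibility, whereas $j\in X$ by~(R1). The analogous cross-rule cases for colour $\col 0$ --- combining (R5) with (R7), or two (R5) assignments with $\delta(X)\in (A\cup B)\cup\{k\}$ --- yield to similar applications of Lemmas~\ref{l:disbalance} and~\ref{l:depth-indep}.

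The catch-all Rule~(R7) is the main obstacle. A vertex $X$ reaching (R7) is regular by Proposition~\ref{p:irregular} (since (R2)--(R4) failed), contains no $W$-consecutive pair (else (R5) fires), and is not skew at any $d_i$ (else (R6) fires). I would argue that these combined constraints leave essentially only $A$ and $B$: regularity forces $X\subseteq A\cup B$ with exactly $c_i$ elements in $[d_i,c_i)$; the absence of $W$-consecutive pairs prevents $X$ from mixing $A$ and $B$ elements across the gaps free of $C\cup D$; and the non-skew condition removes the remaining freedom inside each $U_i$. Two (R7) vertices must then either be equal or be $A$ and $B$, and since $AB$ has been removed from $G$, either conclusion gives non-adjacency in $G$.
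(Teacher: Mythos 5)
Your overall strategy is the same as the paper's: decompose by colour class (equivalently, by pairs of rules assigning the same colour) and appeal to common elements, Proposition~\ref{p:half} with pigeonhole, and the disbalance and depth lemmas. Several of the individual claims are correct and match the paper. However, there are genuine gaps.

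The most concrete error is the claim that two vertices both receiving $\col{c_i}$ via Rule~(R3) are handled by pigeonhole on a single set. Proposition~\ref{p:half} sends a min-heavy vertex to $U_i\cup\Setx{c_i}$ and a max-light vertex to $U_{i+1}\cup\Setx{d_i}$; these are \emph{different} sets, so pigeonhole gives no common element when $X$ is min-heavy on $(d_i,c_i]$ and $Y$ is max-light on $(d_i,c_i]$. The paper treats this mixed case with Lemma~\ref{l:disbalance}(i) directly (using $\size{(d_i,c_i]_X}>c_i$ and $\size{(d_i,c_i]_Y}<c_i$), not with pigeonhole. The same issue recurs for Rule~(R4).

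Beyond that, the proposal only partially surveys the cross-rule interactions within a colour class. For $\col{c_i}$ you must also handle (R2) vs (R3) and (R3) vs (R5); for $\col{d_i}$ you must handle (R4) vs (R6); for $\col 0$ you must handle (R5) vs (R5) with depths in $(A\cup B)\cup\Setx{k}$, (R5) vs (R7), and (R7) vs (R7). Each of these requires a bespoke application of Lemma~\ref{l:disbalance} or Lemma~\ref{l:depth-control}, not a routine transplant of the same-rule argument. In particular the (R4) vs (R6) case in Claim~\ref{cl:d} of the paper is a careful calculation using the standard $AB$-alternator's property that $d_i+1\in W$ and the independence of $Y$ in $C_n$.

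Finally, for Rule~(R7), regularity, absence of a $W$-consecutive pair, and non-skewness do \emph{not} by themselves force $X\in\Setx{A,B}$; these constraints only pin down $[k,n]_X$. To determine $(c_i,c_{i+1})_X$ the paper assumes $XY$ is an edge, invokes Lemma~\ref{l:placement}(ii) to locate the control pairs, and then uses the almost-interlacing structure of $XY$ to match $(c_i,c_{i+1})_X$ with $(c_i,c_{i+1})_A$. Your sketch suggests a purely combinatorial determination, which is not available; the contradiction with $AB\notin E(G)$ only arises after using the assumed adjacency.
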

\begin{proof}
  We will discuss each colour class in turn. If
  $j\in [n]\setminus (W\cup [k-1])$, then colour $\col j$ is only
  assigned by Rule~\ref{rule:iness}, namely to those vertices that
  contain element $j$. The colour class is therefore independent.

  \setcounter{claim}{0}
  \begin{claim}\label{cl:k}
    If $j\in [k-1]\setminus W$, then the colour class of $\col
    j$ is independent.
  \end{claim}
  \begin{claimproof}
    Colour $\col j$ may be assigned to $X$ by Rule~\ref{rule:iness}
    (if $j\in X$) or by Rule~\ref{rule:depth} (if $X$ contains a
    $W$-consecutive pair and $\delta(X) = j$). Suppose that vertices
    $X,Y$ both get colour $\col j$.

    Suppose that $j\in X$. If $j\in Y$, then $XY$ is not an edge. If
    $Y$ contains a $W$-consecutive pair and $\delta(Y) = j$, then
    $\delta(Y)\in X\cup Y$, so $XY$ is not an edge by
    Lemma~\ref{l:depth-control}.

    The last remaining case is that $X,Y$ both contain a
    $W$-consecutive pair and $\delta(X) = \delta(Y) = j$. In this
    case, $X$ and $Y$ are non-adjacent by Lemma~\ref{l:depth-indep}.
  \end{claimproof}

  At this point, it remains to consider all the colours $\col j$ with
  $j\in C\cup D$ and the colour $\col 0$. This is done in the
  following three claims.

  \begin{claim}\label{cl:c}
    For $i\in [m]$, the colour class of $\col{c_i}$ is independent.
  \end{claim}
  \begin{claimproof}
    Colour $\col{c_i}$ is assigned by Rules~\ref{rule:pair},
    \ref{rule:ccw} and \ref{rule:depth} to essential vertices $X$
    satisfying one of the following:
    \begin{itemize}
    \item $X$ contains a balanced pair $\Setx{c_i,d_i}$,
    \item $X$ contains no balanced pair and $X$ is min-heavy on $(d_i,c_i]$,
    \item $X$ contains no balanced pair and $X$ is max-light on $(d_i,c_i]$,
    \item $X$ contains a $W$-consecutive pair and $\delta(X) = c_i$.
    \end{itemize}

    Let $X$ and $Y$ be vertices of $G$ assigned colour $\col{c_i}$. We
    prove that $XY$ is not an edge of $G$. If both $X$ and $Y$ contain
    $\Setx{c_i,d_i}$, then they are intersecting and therefore
    non-adjacent in $G$. If both are min-heavy or both are max-light
    on $(d_i,c_i]$, they intersect by Proposition~\ref{p:half}.

    Suppose that $X$ is min-heavy on $(d_i,c_i]$. If $Y$ is max-light
    on $(d_i,c_i]$, then $\size{(d_i,c_i]_X} > c_i$ and
    $\size{(d_i,c_i]_Y} < c_i$, so Lemma~\ref{l:disbalance}(i) (with
    $c = c_i$ and $d = d_i+1$) shows that $X,Y$ are non-adjacent.

    If $\Setx{c_i,d_i}$ is a balanced pair in $Y$, then we may suppose
    that $c_i\notin X$. Thus, $\size{(d_i,c_i)_X} > c_i$. On the other
    hand, $\size{[d_i,c_i]_Y} = c_i + 1$, so
    $\size{(d_i,c_i)_Y} = c_i - 1$. Lemma~\ref{l:disbalance}(ii) (with
    $c = c_i$ and $d = d_i+1$) implies that $X,Y$ are non-adjacent.

    It remains to consider the case that one of $X$ and $Y$, say $X$,
    contains a $W$-consecutive pair. By the position of
    Rule~\ref{rule:depth}, it may be assumed that $X$ is regular; in
    particular, $\size{(d_i,c_i]_X} = c_i$. Let $\opair s {s'}$ be a
    $W$-consecutive pair contained in $X$ with $s$ as large as
    possible. Since $\delta(X) = c_i$, we have $s'\in (d_i,c_i]$ and
    $s\notin (d_i,c_i]$, so $s = d_i$.

    If $Y$ also contains a $W$-consecutive pair, then the same
    argument applies; therefore, $d_i\in X\cap Y$ and we are done. If $Y$ is
    min-heavy on $(d_i,c_i]$, then we may assume that $s'\notin Y$
    (otherwise $X$ and $Y$ intersect). Thus
    $\size{(s',c_i]_Y} = \size{(d_i,c_i]_Y} > c_i$, while
    $\size{(s',c_i]_X} = c_i-1$. Lemma~\ref{l:disbalance} implies that
    $X$ and $Y$ are non-adjacent. Finally, if $Y$ is max-light on
    $(d_i,c_i]$, then it may be assumed that $d_i\notin Y$ (otherwise,
    $X$ and $Y$ are intersecting), so $\size{[d_i,c_i]_Y} <
    c_i$. Since $\size{[d_i,c_i]_X} = c_i+1$, $X$ and $Y$ are
    non-adjacent by Lemma~\ref{l:disbalance}. This finishes the proof
    of Claim~\ref{cl:c}.
  \end{claimproof}

  \begin{claim}\label{cl:d}
    For $i\in [m]$, the colour class of $\col{d_i}$ is independent.
  \end{claim}
  \begin{claimproof}
    Note that since $d_i > k$ by the definition of $AB$-alternator,
    Rule~\ref{rule:depth} does not assign colour $\col{d_i}$. Thus,
    colour $\col{d_i}$ is only assigned by Rules~\ref{rule:cw} and
    \ref{rule:skew} to vertices $X$ satisfying one of the following:
    \begin{itemize}
    \item $X$ contains no balanced pair and is min-heavy on
      $[d_i,c_i)$,
    \item $X$ contains no balanced pair and is max-light on
      $[d_i,c_i)$,
    \item $X$ is regular and skew at $d_i$.
    \end{itemize}

    Suppose that $X$, $Y$ are disjoint vertices of $G$ assigned colour
    $\col{d_i}$. We prove that $X$ and $Y$ are non-adjacent in $G$.

    Suppose first that both $X$ and $Y$ are min-heavy or max-light on
    $[d_i,c_i)$. Let $X$ be min-heavy on $[d_i,c_i)$. By
    Proposition~\ref{p:half}(i), $Y$ is not min-heavy on $[d_i,c_i)$,
    for otherwise $X$ and $Y$ would intersect. Thus we may assume
    that $Y$ is max-light on $[d_i,c_i)$, but then
    $\size{[d_i,c_i)_Y} < c_i$ and $\size{[d_i,c_i)_X} > c_i$, so $XY$
    is not an edge by Lemma~\ref{l:disbalance}. A symmetric argument
    applies in case $X$ is max-light on $[d_i,c_i)$.

    Assume thus that $X$ is regular and skew at $d_i$. We will
    also assume that $i > 1$, the $i=1$ case being analogous. By
    the position of Rule~\ref{rule:depth}, $X$ contains no $W$-consecutive
    pair. Consider the set $S = U_i\cup \Setx{d_i}$ and recall that
    $\size S = 2(c_i-c_{i-1})+1$. Since $X$ is regular,
    $\size{X\cap S} = c_i-c_{i-1}$.

    We now distinguish several cases according to the type of $Y$. If $Y$
    is skew at $d_i$, then $X$ and $Y$ both contain the second smallest
    element of $(d_i,d_{i-1})_W$, a contradiction.
    
    Assume next that $Y$ is min-heavy on $[d_i,c_i)$. By
    Proposition~\ref{p:half}, $\size{Y\cap S} = c_i-c_{i-1}+1$, so the
    sets $X\cap S$ and $Y\cap S$ partition $S$. Now $d_i\notin X$
    (since $X$ is regular), so $d_i\in Y$. Since $C\cup D$ is the
    standard $AB$-alternator, the
    smallest element of $(d_i,d_{i-1})_W$ is $d_i+1$. We have
    $d_i+1\notin X$ as $X$ is skew at $d_i$ and contains no
    $W$-consecutive pair. It follows that
    $\Setx{d_i,d_i+1}\subseteq Y$, a contradiction with the
    independence of $Y$ in $C_n$.

    It remains to consider the case that $Y$ is max-light on
    $[d_i,c_i)$. Let $d^-$ be the last element of
    $(d_{i+1},d_i)_W$. Since $d^-\in X$, we have $d^-\notin
    Y$. Furthermore, $\size{[d^-,c_i)_X} = c_i + 1$ as $X$ is regular
    on $[d_i,c_i)$, while $\size{[d^-,c_i)_Y} <
    c_i$. Lemma~\ref{l:disbalance} implies that $X$ and $Y$ are
    non-adjacent in $G$. The proof of Claim~\ref{cl:d} is complete.
  \end{claimproof}

  \begin{claim}\label{cl:0}
    The colour class of $\col 0$ is independent.
  \end{claim}
  \begin{claimproof}
    Colour $\col 0$ is assigned by Rule~\ref{rule:depth} to vertices
    $X$ containing a $W$-consecutive pair and having depth in the set
    $[1,k-1]_{A\cup B}\cup\Setx{k}$, and by Rule~\ref{rule:zero} to
    vertices satisfying none of the conditions in
    Rules~\ref{rule:iness}--\ref{rule:skew}. Let us say that $X$ is of
    type~\ref{rule:depth} or~\ref{rule:zero} accordingly. All of these
    vertices are regular (hence subsets of $W$) by
    Proposition~\ref{p:irregular}; additionally, type~\ref{rule:zero}
    vertices contain no $W$-consecutive pair, and are not skew at any
    $d_i$ ($i\in [m]$).

    Suppose that $X$ is a vertex of type~\ref{rule:depth} and $Y$ is
    any vertex coloured $\col 0$ with $X\cap Y = \emptyset$. If
    $\delta(X) = k$ then $X$ is not adjacent to $Y$ by
    Lemma~\ref{l:depth-control}. We may thus assume that
    $\delta(X) \in [1,k-1]_{A\cup B}$. Since $X$ and $Y$ are regular,
    we have $X\cup Y = A\cup B$ and therefore $\delta(X)\in X\cup
    Y$. Lemma~\ref{l:depth-control} implies that $XY$ is not an edge.

    This leaves us with the case that $X,Y$ are vertices of
    type~\ref{rule:zero}. We intend to show that
    $\Setx{X,Y} = \Setx{A,B}$. The set $(d_1,c_1)_W$ consists of
    $2c_1$ elements and each of $X$ and $Y$ contains $c_1$ of
    them. Since none of $X$ and $Y$ contains a $W$-consecutive pair,
    we may assume by symmetry that $(d_1,c_1)_X = (d_1,c_1)_A$. We
    prove by induction that for each $i\in [m]$,
    $(d_i,d_{i-1})_X = (d_i,d_{i-1})_A$; recalling the convention that
    $c_1 = d_0$, the base case is established. Consider $i\geq 2$. Let
    $d^-$ be the largest element of $(d_{i+1},d_i)_W$, and let
    $d', d''$ be the smallest and second smallest element of
    $(d_i,d_{i-1})_W$, respectively. (Recall that each of these sets
    has size at least $2$ by Lemma~\ref{l:placement}(i).) By the
    induction hypothesis and the assumption that $X$ is not skew at
    $d_i$, we have
    \begin{equation*}
      d^-\in X \iff d''\notin X \iff d''\notin A\iff d'\in A\iff d^-\in A.
    \end{equation*}
    Since each element of $(d_{i+1},d_i)_W$ belongs to $X$ or $Y$, and
    since $X$ and $Y$ contain no $W$-consecutive pair, this implies
    that $(d_{i+1},d_i)_X = (d_{i+1},d_i)_A$ as required.

    The above implies that $[k,n]_X = [k,n]_A$. Since $Y$ is regular,
    $[k,n]_Y = [k,n]_B$.

    To prove that $X=A$, it remains to show that
    $(c_i,c_{i+1})_X = (c_i,c_{i+1})_A$ for each $i\in [m-1]$. By
    Lemma~\ref{l:placement}(ii), $d_i$ is contained in a control pair
    for the edge $XY$. Since $X$ and $Y$ are regular, it follows that
    the control pair must be $\opair{c_i}{d_i}$. Let $d$ be the
    largest element of $(d_{i+1},d_i)_W$ and let $c$ be the smallest
    element of $(c_i,c_{i+1})_W$. By the definition of the edge set of
    $G$,
    \begin{equation*}
      c\in X \iff d\notin X\iff d\notin A\iff c\in A.
    \end{equation*}
    Since $X$ and $Y$ contain no $W$-consecutive pair and cover
    $(c_i,c_{i+1})_W$, this determines $(c_i,c_{i+1})_X$ and shows
    that this set equals $(c_i,c_{i+1})_A$. The proof that $X=A$ (and
    $Y=B$) is complete. Since the edge $AB$ does not exist in $G$,
    this finishes the proof of the claim.
  \end{claimproof}
  The proof of Proposition~\ref{p:colouring} is complete.
\end{proof}

\bibliographystyle{hplain}
\bibliography{XG}

\end{document}